\documentclass[11pt]{article}
\usepackage{geometry}                
\usepackage{graphicx,subcaption}
\usepackage{amssymb,amsmath,amsthm}
\usepackage{epstopdf}
\usepackage{personal}
\usepackage{color,xcolor}
\usepackage{mathtools}
\DeclareGraphicsRule{.tif}{png}{.png}{`convert #1 `dirname #1`/`basename #1 .tif`.png}

\usepackage{tikz}
\usetikzlibrary{arrows,intersections,patterns,calc}


\title{Dominating Points of Gaussian Extremes} 

\author{Harsha Honnappa$^1$\and Raghu Pasupathy$^2$\and Prateek Jaiswal$^1$\\}
\date{
$^1$School of Industrial Engineering, Purdue University\\
$^2$Department of Statistics, Purdue University\\[2ex]
}

\begin{document}
\maketitle

\begin{abstract}
We quantify the large deviations of Gaussian extreme value statistics
on closed convex sets in $\bbR^d$. Specifically, we characterize the
asymptotic decay behavior of $\bbP({\mathbf A_n^{-1}} M_n \in
\mathcal{C})$, where $M_n$ is the componentwise maximum of independent
and identically distributed, mean-zero Gaussian random vectors in
$\bbR^d$, $\mathcal{C} \subset \bbR^d$ is an ``atypical" convex set
with interior but not including the origin, and ${\mathbf A_n}$ is a
certain scaling matrix. The asymptotics imply that $M_n$'s
distribution exhibits a rate function that is a simple quadratic function of a unique ``dominating point" $x^*$ located on the boundary of the convex set $\sC$. Furthermore, $x^*$ is identified as the optimizer of a certain convex quadratic programming problem, indicating a ``collusion" between the dependence structure of the Gaussian random vectors and the geometry of the convex set $\sC$ in determining the asymptotics. We specialize our main result to polyhedral sets which appear frequently in other contexts involving logarithmic asymptotics. We also extend the main result to characterize the large deviations of Gaussian-mixture extreme value statistics on general convex sets. Our results have implications to contexts arising in rare-event probability estimation and stochastic optimization, since the nature of the dominating point and the rate function suggest importance sampling measures.

	{Keywords: Large Deviations;
Extreme Value Statistics;
Convex Sets;
Stochastic Optimization; Dominating Points} 

MSC:2000 60F10; 60G70
\end{abstract}



\section{Introduction}

Motivated by numerous applications in operations research, engineering, and the sciences, we study certain ``large deviation" asymptotics of the {\it componentwise maximum} \begin{equation}\label{blext}M_n := (\max_{1\leq i\leq n} X_i^1, \ldots, \max_{1\leq i\leq n} X_i^d),\end{equation} where $X_i = (X_i^1,X_i^2,\ldots,X_i^d) \in \bbR^d, i=1,2,\ldots$ are independent and identically distributed (iid) Gaussian random vectors. Traditionally, the study of $M_n$ has been the domain of Extreme Value Theory (EVT), where an extensive literature has accumulated beginning with the seminal work of Fisher and Tippett\cite{FiTi1928} and Gnedenko\cite{Gn1943} on collections of univariate random variables, with more recent efforts focusing on multivariate collections; see~\cite{DeFe2007} for a comprehensive review of the main results. There is also substantial work on so-called uniform convergence results that aim to understand how large the collection needs to be before the extreme value effect ``kicks in.'' The treatment in~\cite{DeHo1972} seems to be the earliest on this question, followed by numerous other papers~\cite{Co1982,Co1982b,Ha1979b,HaWe1979,Sm1982}. Chapter 4 of the monograph \cite{Vi1994} and the essay by R.W. Anderson in Chapter C, Section I of~\cite{de1983} summarizes this line of work.

It may be argued that the essential focus of EVT is the \emph{nominal} behavior of the block maximum as the number of elements in the block $n \to \infty$. In contrast to EVT, in this paper, we seek to characterize the likelihood of large exceedances of the maximum when measured on a specified scale. Precisely, we characterize an asymptotic of the form \begin{equation}\label{rldpintro} \lim_{n \to \infty} \frac{1}{v(\|\mathbf A_n)\|)} \log \bbP\left(\mathbf A_n^{-1} M_n \in \sC\right) = -J(x^*),\end{equation} where $\bbP\left(\mathbf A_n^{-1} M_n \in \sC\right) \to 0$, $\mathbf A_n$ are appropriately chosen ``scaling matrices" with $\|\mathbf A_n\| \to \infty$, $\sC \subset \bbR^d$ is any closed convex set with non-empty interior, $J(\cdot)$ is a positive-valued rate function, $x^* \in \sC$ is called the \emph{dominating point} in the parlance of~\cite{Ne1984}, and $v(\cdot)$ is a positive-valued function connoting ``speed." We note that the asymptotic in (\ref{rldpintro}) is more restricted than the usual large deviation principle~\cite{DeZe2010}; for this reason, we adopt the terminology of \cite{DuLeSu2003,KoMa2015} and informally call the asymptotic in (\ref{rldpintro}) a {\it restricted} large deviation principle (rLDP). 

\subsection{Motivation} 
To the best of our knowledge~\cite{GiMa2014,Vi2015}~are among the only few papers that explicitly study rLDPs of this type, and that too specifically in the univariate setting. Apart from being of interest in the same statistical modeling contexts as EVT, we have found that the asymptotic in (\ref{rldpintro}) is of particular usefulness in rare-event estimation~\cite{2007asmgly,JuSh2006}. It is well known that standard Monte Carlo estimators are inefficient for estimating rare event probabilities. Consequently, importance sampling (IS) methods have become popular recently and involve constructing a simulation estimator through an altered sampling measure that is designed to reduce the relative error of the resulting estimator. In the light-tailed case, for instance, the change of measure is typically achieved by an exponential measure change (EMC). Crucial to the effectiveness of an IS method for rare event estimation is the choice of the IS measure, often aided by locating the so called dominating point(s) associated with an underlying large-deviation principle~\cite{JuSh2006,ChPa2018}. (The dominating point in a rare event context should be loosely understood as the point around which the rare event probability of interest concentrates, and thus serves as the natural choice for locating the mode of an IS measure.) 

Much of this cited prior work focuses on estimating the likelihood that a multivariate random walk or multidimensional Markov process enters a rare event set. On the other hand, in many engineering settings it is useful to know that the componentwise maximum of a multivariate state vector has entered a rare event  set of interest; see, for instance, examples from reliability engineering~\cite{NiShNa2001}. Similarly, in machine learning and stochatic optimization, performance evaluation of sample average approximation (SAA)~\cite{kim2015guide,shapiro2013sample} or empirical risk minimization (ERM) algorithms~\cite{vapnik1998statistical} requires an understanding of whether the estimated multivariate optimizer lies in a set of interest. In all of these cases, there is a need to understand the large deviations behavior of the componentwise maximum $M_n$ in~(1), as it enters a rare event set $\sC$. In particular, can an appropriate dominating point be characterized? In this paper we focus on convex rare event sets and Gaussian random vectors (and their mixtures) for simplicity, and answer this in the affirmative.

\subsection{Commentary on Main Results}
Our main result, appearing as the rLPD characterized in Theorem~\ref{thm:5}, yields a number of important insights into the nature of the asymptotic likelihood of large exceedances of $M_n$. First, and paralleling~\cite{Ne1983} for multivariate random walks, Theorem~\ref{thm:5} asserts that  the asymptotic in (\ref{rldpintro}) equals a certain simple quadratic function of a characterizable point $x^*$ that lies on the boundary of the set $\sC$. We call $x^*$ the (information-theoretic) {\it dominating point} since the assertion of Theorem~\ref{thm:5} implies that no other point in the set $\sC$ is needed for characterizing the rate $J(\sC)$, under the scaling implied by $\mathbf A_n$. Second, the nature of $v(\cdot)$ characterized through Theorem~\ref{thm:5} indicates that, under the considered scaling, the tail likelihood asymptotically decays at the same order as a Frech{\' e}t distribution.

We show that the dominating point $x^*$ identified by Theorem~\ref{thm:5} is encoded as the solution of a constrained quadratic optimization program, with the quadratic objective emerging from the level manifolds of the multivariate Gaussian density, and constraints imposed by the closed convex set of interest. The location of $x^*$ is thus a consequence of the ``collusion" between the Gaussian structure and the geometry  of the constraining convex set. Importantly, $x^*$ {\it need not} be the point on the boundary of the convex set that is closest to the origin, contrary to intuition. The presence of a unique dominating point and the ability to easily characterize its location has important implications to rare-event estimation because $x^*$'s location suggests that the rare event $\mathbf A^{-1}M_n \in \sC$ will most likely happen by $\mathbf A_n^{-1} M_n$ ``entering" the set $\sC$ around the point $x^*$.


We specialize Theorem~\ref{thm:5} to specific closed convex sets of interest, starting with the standard block cone in Proposition~\ref{prop:boxes}, followed by polyhedral sets defined using a piecewise affine boundary function; see Figure~\ref{fig:2}. The rLDP for a standard block cone follows as a straightforward special case of the main result. By contrast, and even though polyhedral sets can be expressed using invertible linear transformations of the block cone, an rLDP for polyhedral sets poses a few additional technical hurdles due to the possibility of transformation matrix not being invertible. 

Finally, in Proposition~\ref{prop:gmm}, we characterize an rLDP for a componentwise maximum constructed from Gaussian-mixture random vectors under the condition that none of the mean vectors (or ``centroids") of the mixture components lie in the closed convex set. GMM's are widely used in statistical modeling although model performance analysis is often complicated by the fact that the overall distribution is not Gaussian. The rLDP results reported, on the other hand, provide a means of performance analysis within such models, particularly in understanding rare event outcomes.

\subsection{Technical Details}
The proof of Theorem~\ref{thm:5} requires a number of technical difficulties to be hurdled, and proceeds through a series of lemmas. Lemma~\ref{prop:convex} establishes an rLDP determining the log-likelihood that a single scaled Gaussian random vector enters the closed convex set $\sC$. Our analysis is general, allowing constraining convex sets such as cones whose boundary manifolds are not smooth everywhere. Interestingly enough, this lemma may serve as a generalization of the Gaussian tail bound results in \cite{HasHu2003,Has2005} to settings where the measure concentrates in closed convex sets; note that these existing results, which are themselves generalizations of the classic Mills ratio bounds \cite{Sa1962}, exclusively focus on simple box/block sets.

While the proof of the rLDP upper bound is straightforward, establishing the lower bound involves carefully working with the local structure of the convex set around what turns out to be the dominating point. More precisely, the proof of the lower bound proceeds by establishing the corresponding result for a standard Gaussian random vector first. The key difficulty here lies in the fact that the constraining convex set need not be spatially symmetric about the dominating point, making it less obvious as to what type of geometric structure can be used to show the asymptotic concentration of the probability mass near the dominating point. Nonetheless, we demonstrate that the intersection of two normal cones is sufficient to show the asymptotic concentration; indeed, we believe this is the ``minimal" geometric structure necessary, and any other structure will require further regularity conditions on the convex set. Next, using a Cholesky decomposition of the covariance matrix, the isotropic analysis is transformed to the general setting. 

A critical issue in the multivariate setting is how one should define the multivariate extreme event. One natural path is to consider the ``at least one in the set" extreme event, which stipulates that at least one in the collection $\{X_1,X_2,\ldots,X_n\}$ enter the set $\sC$ (after appropriate scaling) --- see~\cite{DeKoMaRo2010} for instance in the context of continuous time stochastic processes. With the ``at least one in the set" definition, a number of technical issues pertaining to rLDPs are easily resolved;  furthermore, a straightforward utilization of indicator functions of convex sets~\cite{Ro2015} allows for generalization to the context of set inclusion. In Lemma~\ref{lem:convex} we prove an rLDP for the ``at least one in the set" extremum event using simple arguments. 

As can be seen in (\ref{blext}), however, we are interested in a more general  multivariate extremum definition than is allowed by the ``at least one in the set" definition. Observe that the ``at least one in the set" extremum event {\it requires} that at least one of the random vectors in the collection $\{X_1,X_2,\ldots,X_n\}$ enter the set $\sC$ after appropriate scaling. On the other hand, under our chosen definition of $M_n$, it is possible for the elements in the collection $\{X_1,X_2,\ldots,X_n\}$ to conspire such that {\it none} of the individual elements of the collection $\{X_1,X_2,\ldots,X_n\}$  enter the set $\sC$ but the block maximum $M_n$ does. This scenario is illustrated in Figure~\ref{fig:3}. In Lemma~\ref{lem:exp-eq} we demonstrate that under a specific asymptotic scaling, the measures corresponding to $M_n$ and the ``at least one in the set" extremum event are asymptotically exponentially equivalent in the sense of \cite[Def. 4.2.10]{DeZe2010}.  

\subsection{Related Literature}
The main result in this paper draws on insights from probability theory, particularly Gaussian ensembles, and convex optimization/geometry. Consequently, there are a number of connections to important and closely related research threads. First, as noted before, there is an explicit connection with EVT. The logarithmic asymptotics established here complement the uniform convergence results for EVT; see \cite[Chapter 4]{Vi1994} and \cite[Section I, Chapter C]{de1983}.

Next, there are clear connections with recent work on extremes of multidimensional Gaussian processes in \cite{DeKoMaRo2010,DeHaJiTa2015,KoMa2015,DeHaJiRo2018} and other related work, where logarithmic asymptotics are derived for the ``at least one in the set" extremum for Gaussian processes. We note, in particular, ~\cite{KoMa2015} where logarithmic asymptotics are derived for the ``at least one in set" extremum of a sequence of (non-iid) generally distributed random vectors. The authors present a general theory closely aligned with the restricted LDP method for univariate random variables introduced in~\cite{DuLeSu2003}, whereby the Gartner-Ellis condition need not be satisfied. Of course, our results are more restrictive in the sense that we only study iid Gaussian random vectors, but we also consider entry into general convex extreme value sets.

Our results are also closely related to the important series of papers by Hashorva and H{\"u}sler~\cite{HasHu2002,HasHu2003,Has2005} generalizing the classic Mills ratio Gaussian tail bound~\cite{Sa1962}. We observe that the quadratic program logarithmic asymptote derived in Lemma 1 is also implied by the tail bound dervied in~\cite{HasHu2003,Has2005} for the standard block cone. In~\cite{HasHu2003}, the authors derive exact asymptotics for integrals of Gaussian random vectors, and in particular focus on the ``at least one in the set" extremum for half-space extreme value sets. Our proof does not rely on the bound in \cite{HasHu2003,Has2005} and we establish our result for general closed convex sets and the block maximum. Further, while our results are established in the iid setting, they also provide a roadmap for establishing logarithmic asymptotics for stochastic processes more generally, extending recent work in \cite{DeKoMaRo2010} to a less restrictive definition of the extremum. 

The rest of the paper is organized as follows. In Section~3, we establish our main result in Theorem~\ref{thm:5} as a consequence of Lemma 1, Lemma 2 and Lemma 3. We then provide some additional insight in Proposition~\ref{prop:insight} into the location of the dominating point. In Section~4 we present a number of special cases, starting with the block set in Proposition~\ref{prop:boxes}, then polyhedral sets in Proposition~\ref{thm:4} and, finally, Gaussian mixture models in Proposition~\ref{prop:gmm}. We start by summarizing the notation and definitions used in Section~2.

\section{Notation and Definitions}
We assume that all vectors lie in the $d$-dimensional real space, $\bbR^d$. We will use the superscript to denote the components of a vector in $\bbR^d$, that is, $x = (x^1,x^2,\ldots,x^d) \in \bbR^d$. We will sometimes find it convenient to use the standard basis vectors $e_i \in \bbR^d, i = 1,2,\ldots,d$. We will sometimes use $\|x\|$ to refer to the Euclidean norm $\sqrt{\langle x, x \rangle}$ of the vector $x \in \bbR^d$. The boundary of any set $\sC$ is denoted by $\partial \sC$. We refer to the subdifferential of a function $g$ by $\partial g$; there should be no confusion from the context. All binary relations between vectors are defined componentwise. Next, we note the following definitions used throughout.

Our interest extends to settings where the
extrema lie in a closed (and hence measurable) convex subset of $\bbR^d$. A definition in this more
general setting depends on the inclusion event $\{X \in \sC\}$,
which is most naturally defined using 
\begin{definition}[Indicator Function]~\label{def:if}
  Let $\sC \in \bbR^d$ be an arbitrary subset. Then,
  \begin{align}
  \label{eq:17}
  \delta_{\mathcal C}(x) :=
  \begin{cases}
    0 &~\text{if}~ x \in \mathcal C\\
    -\infty &~\text{if}~x \not \in \mathcal C.
  \end{cases}
  \end{align}
\end{definition}

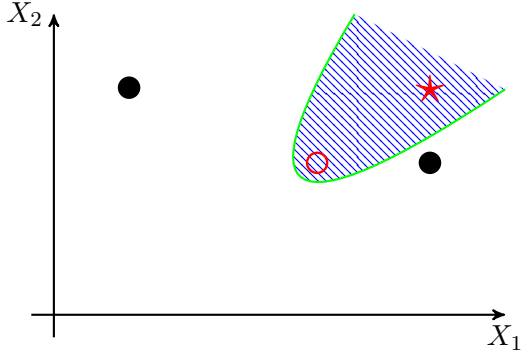
\begin{figure}[h]\centering
  \begin{tikzpicture}[
    thick, >=stealth', dot/.style =
      { draw, fill = none, circle, inner sep = 0pt, minimum size = 10pt
      }]
      \coordinate(0) at (0,0);
      \draw[->] (-0.3,0) -- (6,0) node[below] {$X_1$};
      \draw[->] (0,-0.3) -- (0,4) node[left] {$X_2$};   
      \fill[pattern=north west lines, pattern color=blue,dashed] (6,3)
      .. controls (3,1) and (2.5,1.5) ..(4,4);
      \draw[green] (6,3) .. controls (3,1) and (2.5,1.5) .. (4,4);
      \node[black,scale=2.0] at (1.0,3.0) {\textbullet};
      \node[black,scale=2.0] at (5.0,2.0) {\textbullet};
      \node[red,scale=2.0] at (5.0,3.0) {$\star$};
      \node[red,scale=2.0,] at (3.5,2.0) {$\circ$};
  \end{tikzpicture}
  \caption{Contrasting the extrema: three samples are shown as circles. The ``at least one in the set" extremum
  is the $\color{red} \huge\circ$, while the block extremum is the
  $\color{red} \huge\star$, generated as the componentwise maximum of
  the samples.}
  \label{fig:3}
\end{figure}

\begin{definition}[``At least one in the set" Extremal Event]
  Let $\{ X_n\}$ be an ensemble of $d$-dimensional 
  random vectors, then for any closed set $\sC$
  \begin{align}
    \label{eq:42}
    \left\{\tilde{X}_n \in \sC \right\} := \left\{\exists i \leq n:  \d_\sC(X_i) \geq 0 \right\}
  \end{align}
  is the ``at least one in the set" extremal event.
\end{definition}

\begin{definition}[Componentwise Extremal Event]
  The componentwise extremal event of an ensemble of $d$-dimensional random vectors $\{ X_n\}$ on a closed set $\sC$ is defined as
  \begin{align}
    \label{eq:34}
\left\{M_n \in \sC \right\} = \left\{ \d_\sC(M_n) \geq 0\right\},
  \end{align}
  where $M_n := (\max_{1\leq i\leq n} X_i^1, \ldots, \max_{1\leq i\leq n} X_i^d)$ is the componentwise maximum.
\end{definition}
Figure~\ref{fig:3} contrasts the two definitions of extrema, with the circles representing random vectors in the quadrant, and one of them lies in the set $\sC$ (shown as the hatched parabola). The componentwise maximum is the $\color{red}\star$, generated componentwise from the samples. Clearly, the two definitions of extrema need not coincide.

Next, given an ensemble of stationary random vectors $\{ X_n\} \subset \bbR^d$ and a closed set $\sC \subset \bbR^d_+$ (the positive orthant), we defined an \emph{atypical set} as follows. 

\begin{definition}[Atypical Set]
  If $\sC \not \ni \bbE[ X_n]$ for all $n \geq 1$, then $\sC$ is an {\it atypical set}.
\end{definition}

We will establish logarithmic asymptotics that correspond to a \textit{restricted} form of the large deviation principle (rLDP) akin to~\cite{DuLeSu2003,KoMa2015}. We emphasize that we do not establish a full-fledged LDP, since our results are exclusively ``one-sided," focused on deviations into the positive orthant. Unlike~\cite{DuLeSu2003,KoMa2015}, however, we require the scaling functions to be regularly varying at infinity.

\begin{definition}[Normal Cone, Hoop, Hoop-Segment, and Truncated Hoop-Cone Segment]\label{def:truhooconseg} The truncated hoop-cone segment is an object that will be used in the proof of Lemma~\ref{prop:convex}. It's precise description involves three much simpler objects, each of which we define below. See Figure~\ref{hoop-cones} for intuition in $\bbR^3$. \begin{enumerate} \item[(i)] A \emph{normal cone} $\mathcal{N}(z^*,y)$ having vertex $z^* \in \bbR^d$ and passing through the point $y \in \bbR^d, y \neq z^*$ is given by
$$\mathcal{N}(z^*,y) := \left\{z : ((z-z^*)^Te_d)^{-1} \left(\sum_{i=1}^{d-1} (z^Te_i)^2\right)^{\frac{1}{2}} \leq ((y-z^*)^Te_d)^{-1} \left(\sum_{i=1}^{d-1} (y^Te_i)^2\right)^{\frac{1}{2}} \right\}.$$ \item[(ii)] A \emph{hoop} $\mathcal{H}(z^*,z_0,w)$ having vertex $z^*$, offset $z_0$, and width $w$ is given by $$\mathcal{H}(z^*,z_0,w) := \bigg\{\mathcal{N}(z^*,z_0 + \frac{w}{2}) \setminus \mathcal{N}(z^*,z_0 - \frac{w}{2})\bigg\} \bigcap \bigg\{z \in \bbR^d: z^Te_d = z_0^Te_d\bigg\}.$$ \item[(iii)] A \emph{hoop-segment} $\mathcal{HS}(z^*,z_0,w,\ell)$ having vertex $z^*$, offset $z_0$, width $w$, and arc-length $\ell$ is given by $$\mathcal{HS}(z^*,z_0,w,\ell) := \mathcal{H}(z^*,z_0,w) \bigcap \bigg\{z \in \bbR^d: \frac{(z-\tilde{z}_0)^T(z_0 - \tilde{z}_0)}{\|z-\tilde{z}_0\| \|z_0 - \tilde{z}_0 \|} \leq \cos \left(\frac{\ell/2}{\|z_0 - \tilde{z}_0\| + \frac{w}{2}}\right)\bigg\}.$$ \item[(iv)] A \emph{truncated hoop-cone segment} having vertex $z^*$, offset $z_0$, width $w$, and arc-length $\ell$ is the convex hull $\mbox{conv}\left(z^*,\mathcal{HS}(z^*,z_0,w,\ell)\right)$ of $z^*$ and the hoop-segment $\mathcal{HS}(z^*,z_0,w,\ell)$. \end{enumerate} 

\end{definition}

\section{Main Results}
In this section, we characterize an rLDP of the type (\ref{rldpintro}), 
under the scaling regime implied by Assumption~\ref{assume:3}. The proposed rLDP  will follow from a sequence of lemmas establishing important
intermediate results.

Let $\{X_1,X_2,\ldots\}$ be an iid ensemble of $d$-dimensional
mean-zero Gaussian random vectors with positive-definite covariance matrix $\Sigma$, and let
$\sC \subset \bbR^d$ be a closed convex set. The following three assumptions will be used in the results that follow.
\begin{assumption}\label{assume:1} 
  The set $\sC$ is atypical, that is, it does not contain the origin. Furthermore,  \begin{enumerate} \item the set $\sC$ has at least one \emph{extreme} point, that is, a point $x \in \sC$ such that $\nexists \, x_1, x_2 \in \sC$ and $\alpha \in (0,1)$ such that $x = \alpha x_1 + (1- \alpha)x_2$; \item the set $\sC$ has an interior, that is, $\exists \, x_0 \in \sC$ and $\epsilon > 0$ such that the neighborhood $\sB(x_0,\epsilon) = \{x: \|x - x_0\| \leq \epsilon\} \subset \sC$.\end{enumerate}
\end{assumption}

\begin{assumption}\label{assume:3} The scaling matrix $\mathbf A_n := \textsc{diag}(a_{n,1},\ldots, a_{n,d})$ is a diagonal matrix that satisfies
    \begin{align}
      \frac{\mathbf A_n}{\| \mathbf A_n \|} &\to \mathbf A :=
      \textsc{diag}(a_1,\ldots,a_d),\\
      \frac{\| \mathbf A_n \|^2_2}{2\log n} &\to 1,~\text{as}~n\to\infty,
    \end{align}
    where $a_i > 0$ for all $i = 1,\ldots, d$ .
  \end{assumption}
 The sequence of scaling matrices $\mathbf A_n$ \textit{compresses} or shrinks the level sets of the multivariate Gaussian density function, or equivalently, it up-scales each dimension of a vector $\epsilon \in \partial \sC$ (roughly) to order
  $O(\sqrt{\log n})$. 
\begin{assumption}\label{assume:3c} The limit matrix $\mathbf
  A$ in Assumption~\ref{assume:3} is such that for any $\e \in \partial \sC$, $$\frac{1}{2}\langle  \mathbf A\mathbf \e,  \Sigma^{-1} \mathbf A\e
 \rangle  > 1.$$
\end{assumption}

Our first intermediate lemma establishes a limit result for the
log-likelihood that a scaled Gaussian random vector sample lies in a convex set $\sC$.

\begin{lemma}~\label{prop:convex}
	Let Assumption~\ref{assume:1} hold. Also, let $a_n$ be a sequence such that $a_n \to \infty$ as $n \to
	\infty$. Then,
	\begin{align}
	\label{eq:24}
	\lim_{n\to\infty} \frac{1}{a^2_n} \log \bbP\left( \d_\sC\left( \frac{X}{a_n} \right) \geq
	0\right) = -\frac{1}{2} {\left\langle x^* , \Sigma^{-1} x^* \right \rangle},
	\end{align}
	where $x^*= \underset{x \in \mathcal {C}}{\arg \inf}{\left\langle x , \Sigma^{-1} x \right \rangle}$.\\
\end{lemma}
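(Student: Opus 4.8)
The plan is to recast the event in terms of a dilated set and evaluate the resulting Gaussian integral by Laplace's method. Since $\{X/a_n \in \sC\} = \{X \in a_n \sC\}$, writing the $N(0,\Sigma)$ density explicitly and changing variables $x = a_n y$ gives
\begin{equation*}
\bbP(X \in a_n\sC) = \frac{a_n^d}{(2\pi)^{d/2}|\Sigma|^{1/2}} \int_{\sC} \exp\!\left(-\frac{a_n^2}{2}\langle y, \Sigma^{-1} y\rangle\right) dy .
\end{equation*}
Because $\Sigma^{-1}$ is positive definite, the quadratic form is strictly convex and coercive, so over the closed convex set $\sC$ it attains its infimum at the unique point $x^*$; and since $\sC$ is atypical (Assumption~\ref{assume:1}), $x^*$ lies on $\partial\sC$. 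The polynomial prefactor $a_n^d$ contributes $d\log a_n / a_n^2 \to 0$ and is harmless, so the entire problem reduces to the Laplace asymptotics of the displayed integral, whose exponential rate is governed by $\inf_{y\in\sC}\tfrac12\langle y,\Sigma^{-1}y\rangle = \tfrac12\langle x^*,\Sigma^{-1}x^*\rangle$.

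Following the paper's stated strategy I would first reduce to the isotropic case. Writing $\Sigma = LL^\top$ (Cholesky) and $X = LZ$ with $Z \sim N(0,I)$, the event becomes $\{Z \in a_n \hat{\sC}\}$ with $\hat{\sC} := L^{-1}\sC$ closed and convex, while $\langle x,\Sigma^{-1}x\rangle = \|z\|^2$; hence the minimizer transforms to $z^* = L^{-1}x^*$, the Euclidean nearest point of $\hat{\sC}$ to the origin, with $\|z^*\|^2 = \langle x^*,\Sigma^{-1}x^*\rangle$. It therefore suffices to show $\lim_n a_n^{-2}\log\bbP(Z \in a_n\hat{\sC}) = -\tfrac12\|z^*\|^2$, where the rotational symmetry of $Z$ lets me place $z^*$ on the $e_d$-axis and work in the spherical/``hoop'' coordinates of Definition~\ref{def:truhooconseg}.

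For the upper bound I would use first-order optimality of $z^*$: since $z^*$ minimizes $\|z\|^2$ over the convex set $\hat{\sC}$, the variational inequality $\langle z^*, z - z^*\rangle \ge 0$ holds for all $z \in \hat{\sC}$, so $\hat{\sC}$ lies in the half-space $\{z : \langle z^*, z\rangle \ge \|z^*\|^2\}$. Thus $\bbP(Z\in a_n\hat{\sC}) \le \bbP(\langle z^*, Z\rangle \ge a_n\|z^*\|^2) = \bbP(N(0,1) \ge a_n\|z^*\|)$, and the classical bound $\bbP(N(0,1)\ge t)\le e^{-t^2/2}$ gives $\limsup_n a_n^{-2}\log\bbP(Z\in a_n\hat{\sC}) \le -\tfrac12\|z^*\|^2$. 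For the lower bound I would localize near $z^*$: for each $\delta>0$ the set $B_\delta := \mathrm{int}(\hat{\sC})\cap\{z : \|z\|^2 < \|z^*\|^2 + \delta\}$ is open and nonempty — $\hat{\sC}$ inherits a nonempty interior from $\sC$ via the isomorphism $L^{-1}$ (Assumption~\ref{assume:1}), so $z^*\in\overline{\mathrm{int}(\hat{\sC})}$, and the quadratic is continuous — hence $B_\delta$ has positive Lebesgue measure. Restricting the corresponding integral to $B_\delta$ and bounding the integrand below by $\exp(-\tfrac{a_n^2}{2}(\|z^*\|^2+\delta))$ yields $\liminf_n a_n^{-2}\log\bbP(Z\in a_n\hat{\sC}) \ge -\tfrac12(\|z^*\|^2+\delta)$; letting $\delta\downarrow 0$ and undoing the Cholesky change of variables returns the claim for general $\Sigma$.

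I expect the lower bound to be the main obstacle. The clean ``positive-measure sublevel set'' argument above suffices in principle, but making it robust when $\partial\hat{\sC}$ is nonsmooth or asymmetric about $z^*$ — exactly the situation flagged in the introduction — calls for an explicit region of controlled volume and controlled radial extent inside $\hat{\sC}$ near $z^*$. This is precisely the purpose of the truncated hoop-cone segment of Definition~\ref{def:truhooconseg}: it supplies a concrete set (an intersection of normal cones, capped and truncated) that fits inside $\hat{\sC}$ near the dominating point and on which the Gaussian mass can be estimated from below in spherical coordinates. Verifying that such a region exists and that its radial profile does not erode the exponential rate is where the real work lies; the upper bound, by contrast, is essentially immediate from the supporting half-space and a one-dimensional Mills-ratio estimate.
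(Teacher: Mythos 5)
Your upper bound is the paper's upper bound: the supporting half-space $\{z:\langle z^*,z\rangle\ge\|z^*\|^2\}$ obtained from the variational inequality at $z^*$, followed by a one-dimensional Gaussian tail estimate, is exactly how the paper proves \eqref{eq:c1}. Your lower bound, however, is a genuinely different and substantially more elementary route. The paper localizes near $z^*$ by constructing a truncated hoop-cone segment inside $L\sC$, replicating it into $n+1$ rotationally symmetric copies, and evaluating the Gaussian mass of the resulting annular cone via Mills' ratio and a chi-distributed radial integral (the computation of $I_n$ in \eqref{remconst}). You instead fix $\delta>0$, take the set $B_\delta=\mathrm{int}(\hat{\sC})\cap\{z:\|z\|^2<\|z^*\|^2+\delta\}$ --- open, nonempty because $z^*$ lies in the closure of $\mathrm{int}(\hat{\sC})$ for a convex set with interior, hence of positive Lebesgue measure independent of $n$ --- bound the integrand below by $\exp\bigl(-\tfrac{a_n^2}{2}(\|z^*\|^2+\delta)\bigr)$ on $a_nB_\delta$, and send $\delta\downarrow 0$. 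This is the standard Laplace/Varadhan lower-bound device, it is complete as stated, and it requires no control whatsoever on the local geometry of $\partial\hat{\sC}$ at $z^*$ (smooth, nonsmooth, or asymmetric): the only inputs are convexity, nonempty interior, and continuity of the quadratic. Your closing paragraph therefore concedes too much --- the hoop-cone machinery is not needed to make your argument ``robust''; it is needed only if one wants the sharper, non-logarithmic information the paper alludes to (polynomial prefactors and a path to exact asymptotics, which your constant-times-exponential lower bound deliberately discards). For the logarithmic asymptotic actually claimed in Lemma~\ref{prop:convex}, your proof is correct and shorter; what the paper's construction buys is the finer estimate \eqref{lbsetup2}--\eqref{remconst} that quantifies how the mass concentrates in a shrinking annulus around the $z^*$-axis.
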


\tikzset{mystyle/.style={shape=circle,fill=red,scale=0.4}}
\begin{figure}[h]\centering
  \begin{tikzpicture}[
    thick, >=stealth', dot/.style =
      { draw, fill = none, circle, inner sep = 0pt, minimum size = 5pt
      },scale=0.5]
      \coordinate(0) at (0,0);
      \draw[->] (-6,0) -- (6,0) node[below] {$Z_1$};
      \draw[->] (0,-.3) -- (0,6) node[left] {$Z_2$};
      \draw[dashed] (-6,1.5) -- (6,1.5);
      \draw[dashed] (-6,5.1) -- (6,5.1);
      \draw[dashed,<->] (-5,1.5) -- (-5,5) node[midway,left]{$\|z^*\| \e$};
            
       \node[below] at (.6,1.75){ $\pmb z^*$};
       \node [anchor=west] (c) at (-4,6) {\Large $L\sC$};
	   \node [anchor=west] (cs) at (.5,4) {\Large $\tilde{\sC}$};
	   \node[mystyle] at (0,1.5){};
			\draw[scale=0.5,domain=-10:10,smooth,variable=\x,red] plot ({\x},{0.1 * (\x* \x)+3});     
			\draw[scale=0.5,domain=-3.8:3.8,smooth,variable=\x,green,pattern=north west lines, pattern color=blue] plot ({\x},{0.5*(\x*\x)+3});          
           \draw[rotate=20] (0,0) circle (0.2cm);
            \draw[rotate=20] (0,0) circle (0.5cm);
  \end{tikzpicture}
  \caption{Bivariate representation of set $\tilde{\sC}$, the intersection of epigraph of $L\sC$ and halfspace. }
  
  \label{fig:pareto}
\end{figure}
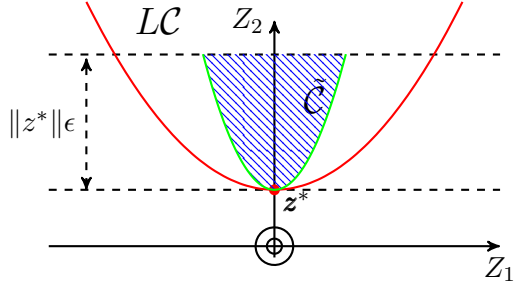
Note that $x^*$ exists and is unique since the objective $\left\langle x , \Sigma^{-1} x \right \rangle$ is a strictly convex quadratic function with unconstrained minimum at the origin, and the closed, convex constraint
set $\sC$ does not contain the origin. Lemma~\ref{prop:convex} can be interpreted as extending existing Gaussian tail bound results of the sort $\bbP(X_n >
t_n)$ as $t_n \to \infty$~\cite{HasHu2003,Has2005} to the context of general convex atypical sets. While only logarithmic asympotics are considered in Lemma~\ref{prop:convex} due to the nature of results sought in the current paper, the proof of Lemma~\ref{prop:convex} also seems to provide a path toward proving exact asymptotics. 

The proof of the Lemma~\ref{prop:convex} proceeds by constructing upper and lower bounds that are shown to coincide for the standard multivariate Gaussian case, obtained after an appropriate transformation. The upper
bound follows from a (standard) Gaussian concentration inequality. Most of the complications arise in establishing a tight lower bound. The
key step in establishing the lower bound is to strategically use the
subdifferentials of the boundary of the convex set
to show that, in the asymptotic scale, the measure of the event
$\left\{\d_\sC\left( \frac{X}{a_n} \right) \geq 0 \right\}$ concentrates at
the intersection of the boundary and one of the axes. The inverse transform then projects the limit
result back to the original coordinate system. Figure~\ref{fig:pareto}
demonstrates the construction for a case where the boundary $\partial \sC$ is smooth in the neighborhood of the point $z^*$, representing
the closest point on the boundary to the origin. The corresponding
construction for sets $\sC$ having non-smooth boundaries is asymmetric
about the vertical axis, causing further technical complication that
is resolved through a symmetrizing operation. Notice that the right-hand side of Lemma~\ref{prop:convex} includes only the point $z^*$ and the covariance matrix of $X$; particularly, Lemma~\ref{prop:convex} is not fine enough to capture the effect of the curvature of the set $\sC$ around the point $z^*$. 

Next, we prove that the ``at least one in the set" event $\max_{1 \leq i\leq n}\d_\sC\left( \mathbf A_n^{-1} X_i
\right) \geq 0$ satisfies an asymptotic of the type (\ref{rldpintro}).
\begin{lemma}~\label{lem:convex}
	Let Assumption~\ref{assume:1}, Assumption~\ref{assume:3}, and Assumption~\ref{assume:3c} hold. Let the scaling matrix $\mathbf A_n$ and the limit matrix $\mathbf A$ be as defined in Assumption~\ref{assume:3} and Assumption \ref{assume:3c}, and let $x^*= \underset{x \in \mathcal {C}}{\arg \inf}{\left\langle x , \Sigma^{-1} x \right \rangle}$. Then,
	\begin{align}
	\label{eq:40}
	\lim_{n\to\infty} \frac{1}{\|\mathbf A_n \|^2_2} \log \bbP
	\left(\max_{1 \leq i \leq n} \d_\sC\left(\mathbf A_n^{-1}
	X_i\right) \geq 0 \right) = \frac{1}{2} -\frac{1}{2} {\left\langle x^* , \mathbf A^{-1} \Sigma^{-1} \mathbf A x^* \right \rangle},
	\end{align}
	where $\mathbf A$ is the limit matrix in Assumption~\ref{assume:3}.
      \end{lemma}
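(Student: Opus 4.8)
The plan is to use the iid structure to collapse the ``at least one in the set'' probability into the single-sample tail probability governed by Lemma~\ref{prop:convex}, and then to read off the two additive pieces of the rate: a contribution of size $\tfrac12$ coming from the block size $n$ through the speed calibration in Assumption~\ref{assume:3}, and the single-sample quadratic rate evaluated on the limit-scaled set $\mathbf A\sC$. Throughout write $a_n:=\|\mathbf A_n\|_2$, $\mathbf B_n:=\mathbf A_n/a_n\to\mathbf A$, and $p_n:=\bbP\big(\mathbf A_n^{-1}X\in\sC\big)$. First I would use independence together with $\{\delta_\sC(\mathbf A_n^{-1}X_i)\ge 0\}=\{\mathbf A_n^{-1}X_i\in\sC\}$ to obtain
\[
\bbP\Big(\max_{1\le i\le n}\delta_\sC(\mathbf A_n^{-1}X_i)\ge 0\Big)=1-(1-p_n)^n .
\]
Provided $np_n\to 0$, the elementary two-sided bound $np_n\big(1-\tfrac{np_n}{2}\big)\le 1-(1-p_n)^n\le np_n$ gives $1-(1-p_n)^n=np_n(1+o(1))$, so that $a_n^{-2}\log\bbP(\cdots)=a_n^{-2}\log n+a_n^{-2}\log p_n+o(1)$. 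By Assumption~\ref{assume:3}, $a_n^2/(2\log n)\to 1$, whence $\log n/a_n^2\to\tfrac12$; this is precisely the source of the additive $\tfrac12$ in the claim, and it remains to evaluate $\lim_n a_n^{-2}\log p_n$ and to verify $np_n\to 0$.

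\textbf{Single-sample term via Lemma~\ref{prop:convex}.} For $p_n$ I would transform the scaling onto the set. Since $\mathbf A_n^{-1}X\in\sC$ iff $X\in\mathbf A_n\sC=a_n\,\mathbf B_n\sC$, we have $p_n=\bbP\big(X/a_n\in\mathbf B_n\sC\big)$, which up to the vanishing drift $\mathbf B_n\to\mathbf A$ is $\bbP\big(X/a_n\in\mathbf A\sC\big)$. Because $\mathbf A=\mathrm{diag}(a_1,\dots,a_d)$ with $a_i>0$ is invertible, $\mathbf A\sC$ is closed and convex, retains an extreme point and an interior, and excludes the origin, so Assumption~\ref{assume:1} holds for $\mathbf A\sC$ and Lemma~\ref{prop:convex} applies to $X$ (covariance $\Sigma$, scalar scaling $a_n$) and gives $a_n^{-2}\log\bbP(X/a_n\in\mathbf A\sC)\to-\tfrac12\langle y^*,\Sigma^{-1}y^*\rangle$ with $y^*=\arg\inf_{y\in\mathbf A\sC}\langle y,\Sigma^{-1}y\rangle$. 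Substituting $y=\mathbf Ax$ and using that $\mathbf A$ is symmetric yields $y^*=\mathbf Ax^*$ where $x^*=\arg\inf_{x\in\sC}\langle x,\mathbf A\Sigma^{-1}\mathbf Ax\rangle$, and $\langle y^*,\Sigma^{-1}y^*\rangle=\langle x^*,\mathbf A\Sigma^{-1}\mathbf Ax^*\rangle$; equivalently, $\mathbf A\Sigma^{-1}\mathbf A$ is the inverse covariance of the limit-scaled vector $\mathbf A^{-1}X$. This is the quadratic rate asserted in the lemma.

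\textbf{Main obstacle.} The step requiring genuine care is that $p_n$ concerns the \emph{drifting} set $\mathbf B_n\sC$ rather than the fixed set $\mathbf A\sC$, whereas Lemma~\ref{prop:convex} is stated for a fixed convex set. A naive monotone-inclusion sandwich fails here, since diagonal rescalings with distinct entries do not produce nested images of $\sC$. I would instead close the gap by a continuity argument: the optimal value $V(\mathbf D):=\inf_{x\in\sC}\langle x,\mathbf D\Sigma^{-1}\mathbf Dx\rangle$ depends continuously on the diagonal matrix $\mathbf D$ (the objective matrix $\mathbf D\Sigma^{-1}\mathbf D$ is continuous in $\mathbf D$ and the infimum is attained by coercivity of the strictly convex objective over the closed set $\sC$), so $V(\mathbf B_n)\to V(\mathbf A)$. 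Since the governing quantity in both the Gaussian concentration upper bound and the lower-bound construction near the dominating point (the truncated hoop-cone segment of Definition~\ref{def:truhooconseg}) enters only through this quadratic value, and since these constructions vary continuously with the diagonal scaling, the proof of Lemma~\ref{prop:convex} is robust to the $o(1)$ perturbation $\mathbf B_n\to\mathbf A$; this delivers $a_n^{-2}\log p_n\to-\tfrac12\langle x^*,\mathbf A\Sigma^{-1}\mathbf Ax^*\rangle$. (The cleanest way to record this is a mild uniform strengthening of Lemma~\ref{prop:convex}, valid uniformly over sets $\mathbf D\sC$ for $\mathbf D$ in a neighborhood of $\mathbf A$.)

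\textbf{Closing the loop with Assumption~\ref{assume:3c}.} Combining the two limits gives, unconditionally, $a_n^{-2}\log(np_n)\to\tfrac12-\tfrac12\langle x^*,\mathbf A\Sigma^{-1}\mathbf Ax^*\rangle$. The minimizer $x^*$ lies on $\partial\sC$, since the unconstrained minimum of the strictly convex objective is the origin, which is excluded by atypicality; hence Assumption~\ref{assume:3c} applied at $\epsilon=x^*$ gives $\tfrac12\langle\mathbf Ax^*,\Sigma^{-1}\mathbf Ax^*\rangle=\tfrac12\langle x^*,\mathbf A\Sigma^{-1}\mathbf Ax^*\rangle>1$. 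Therefore the limiting rate is strictly below $\tfrac12-1=-\tfrac12<0$, so $\log(np_n)\to-\infty$ and $np_n\to 0$. This retroactively justifies the Poisson-type reduction $1-(1-p_n)^n=np_n(1+o(1))$ used at the outset, and the three limits assemble into the claimed value, completing the argument.
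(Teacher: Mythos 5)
Your proposal is correct and follows essentially the same route as the paper: both reduce $\bbP(\max_i \d_\sC(\mathbf A_n^{-1}X_i)\ge 0)$ to $np_n(1+o(1))$ via independence (the paper through the geometric-sum identity and a Taylor expansion of $\log(1-p_n)$, you through the elementary two-sided bound), extract the additive $\tfrac12$ from $\log n/\|\mathbf A_n\|_2^2\to\tfrac12$, feed the single-sample term to Lemma~\ref{prop:convex}, and invoke Assumption~\ref{assume:3c} to kill the correction term; your explicit continuity treatment of the drifting set $\mathbf B_n\sC\to\mathbf A\sC$ is a welcome elaboration of what the paper dismisses as ``minor algebra.'' One remark: the quadratic form you derive is $\langle x^*,\mathbf A\Sigma^{-1}\mathbf A x^*\rangle$ with $x^*$ minimizing that same form over $\sC$, which is the version consistent with Assumption~\ref{assume:3c} and with the covariance of $\mathbf A^{-1}X$, whereas the lemma's display writes $\mathbf A^{-1}\Sigma^{-1}\mathbf A$ and defines $x^*$ through $\langle x,\Sigma^{-1}x\rangle$ --- an internal inconsistency of the paper rather than an error on your part.
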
 

The proof of the upper bound in Lemma~\ref{lem:convex} follows
directly from the iid assumption.  The lower bound uses
Lemma~\ref{prop:convex} to demonstrate that no mass escapes at infinity
(thereby trivializing the likelihood), provided that
$\frac{1}{2}\langle x^*, \mathbf A^{-1} \Sigma^{-1} \mathbf A^{-1} x^*
\rangle > 1$. The latter condition is critical, demonstrating a
``collusion" between the geometry of the set $\sC$ and the level sets
of the density function. In particular, the set $\sC$ needs to be
sufficiently far from the origin for the asymptotic to hold. This is in
contrast with Lemma~\ref{prop:convex}, where no such restriction is imposed.
      
To extend the result in Lemma~\ref{lem:convex} to the componentwise
maximum $M_n$ that is of interest, we observe that the random variables $\d_\sC\left( \mathbf A_n^{-1} M_n \right)$ and $\max_{1 \leq
    i\leq n} \d_\sC \left( \mathbf A_n^{-1} X_i\right)$ are exponentially equivalent
under the scaling in Assumption~\ref{assume:3}. 


\begin{lemma}~\label{lem:exp-eq}
 	Let Assumption~\ref{assume:1}, Assumption~\ref{assume:3}, and Assumption~\ref{assume:3c} hold. Let the scaling matrix $\mathbf A_n$ and the limit matrix $\mathbf A$ be as defined in Assumption~\ref{assume:3} and Assumption \ref{assume:3c}, and let $x^*= \underset{x \in \mathcal {C}}{\arg \inf}{\left\langle x , \Sigma^{-1} x \right \rangle}$. Then, for any $\g > 0$
  \begin{align}
    \label{eq:41}
    \limsup_{n\to\infty} \frac{1}{\|\mathbf A_n\|^2_2} \log \bbP \left(
    \left|\d_\sC\left( \mathbf A_n^{-1} M_n \right) - \max_{1 \leq
    i\leq n} \d_\sC \left( \mathbf A_n^{-1} X_i\right) \right|  > \gamma
    \right) = -\infty.
  \end{align}
\end{lemma}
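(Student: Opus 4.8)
The plan is to first collapse the statement onto the symmetric difference of two events. Because $\d_\sC$ is $\{0,-\infty\}$-valued, the increment $\d_\sC(\mathbf A_n^{-1}M_n)-\max_{i\le n}\d_\sC(\mathbf A_n^{-1}X_i)$ is either $0$, when the block-maximum event and the ``at least one in the set'' event agree, or $\pm\infty$, when they disagree. Hence for every $\gamma>0$ the event inside \eqref{eq:41} coincides with $E_1\triangle E_2$, where $E_1:=\{\mathbf A_n^{-1}M_n\in\sC\}$ and $E_2:=\{\exists\, i\le n:\mathbf A_n^{-1}X_i\in\sC\}$. Writing $E_1\triangle E_2=(E_1\setminus E_2)\cup(E_2\setminus E_1)$, it suffices to bound each piece and show that $\|\mathbf A_n\|_2^{-2}\log$ of its probability diverges to $-\infty$, i.e. that each piece decays faster than any fixed power of $n$.

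The two pieces are exactly the two ways in which $M_n$ and the ``at least one'' event can disagree. The event $E_1\setminus E_2$ is the genuine conspiracy depicted in Figure~\ref{fig:3}: the block maximum enters $\sC$ although no single sample does. On this event the coordinatewise maximizers $i_j:=\arg\max_i X_i^j$, $j=1,\dots,d$, cannot all coincide, for otherwise that common index would satisfy $X_{i_j}=M_n\in\sC$, contradicting $E_2^c$. Thus at least two distinct samples simultaneously supply a coordinate of $M_n$ at its $\sC$-level. I would encode this by partitioning $\{1,\dots,d\}$ according to which sample realizes each coordinate maximum, union-bounding over the finitely many set partitions and over the $O(n^{d})$ assignments of contributing indices, and then, for a fixed partition into at least two nonempty blocks, bounding the probability that the contributing samples jointly lie in the corresponding lower-dimensional inclusion regions. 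Each such block event is a marginal inclusion of a single scaled Gaussian into a projection of $\sC$, to which Lemma~\ref{prop:convex} applies. The companion piece $E_2\setminus E_1$ is handled in the same spirit: since $M_n\ge X_i$ componentwise, if some $X_i\in\sC$ but $M_n\notin\sC$ then a second, independent sample must have driven a coordinate of $M_n$ strictly past $\partial\sC$, again forcing two simultaneous large coordinates.

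The crux, and the step I expect to be the main obstacle, is to prove that after the $O(n^{d})$ combinatorial factor is paid, the exponential cost of \emph{any} such multi-sample configuration exceeds the cost of $E_1$ (equivalently $E_2$, via Lemma~\ref{lem:convex}) by a margin that does not remain bounded relative to the speed $\|\mathbf A_n\|_2^2$, so that $\|\mathbf A_n\|_2^{-2}\log\bbP(E_1\triangle E_2)\to-\infty$ rather than settling at a finite value. This is precisely where the geometry of $\sC$ and the collusion hypothesis must enter: convexity together with the normal-cone structure of Definition~\ref{def:truhooconseg} restricts how two independent samples can jointly realize a boundary point, while Assumption~\ref{assume:3c} places $\sC$ far enough from the origin that no cheap single-coordinate deviation — and no configuration that would instead require some coordinate of $M_n$ to fall below its typical extreme level (an event of probability exponentially small in $n$, hence super-exponentially small in $\|\mathbf A_n\|_2^2$) — can produce the discrepancy without incurring an extra penalty that outstrips every polynomial in $n$. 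I would isolate the least-costly admissible partition, bound its probability by a product of single-block Lemma~\ref{prop:convex} estimates weighted by the combinatorial count, and verify the resulting exponent diverges; exponential equivalence in the sense of \cite[Def.~4.2.10]{DeZe2010} then follows, transferring the rLDP of Lemma~\ref{lem:convex} to the block maximum $M_n$.
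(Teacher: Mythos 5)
Your reduction of the event in \eqref{eq:41} to the symmetric difference $E_1\triangle E_2$ is correct and matches the paper's opening move, but your argument stops exactly where the lemma's content begins. The step you yourself flag as the crux --- that every multi-sample configuration realizing $E_1\setminus E_2$ costs super-polynomially more than $E_1$ --- is asserted, not proved, and the route you sketch does not deliver it. If you partition the coordinates among $k\geq 2$ contributing samples and bound each block by a marginal application of Lemma~\ref{prop:convex}, the product of block probabilities is governed by the \emph{sum} of the marginal quadratic costs, which can equal (not exceed) the joint cost $\tfrac{1}{2}\langle x^*,\Sigma^{-1}x^*\rangle$: take $\Sigma=\mathbf I$ and $\sC=\{y:y\geq x^*\}$, where the event that sample $i$ supplies coordinate $1$ and sample $j$ supplies coordinate $2$ has probability exactly $\bbP(X_1\geq \mathbf A_n x^*)$ by independence of coordinates. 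The union bound over the $O(n^{d})$ index assignments then works \emph{against} you rather than being a cost to absorb: it multiplies the per-configuration probability by extra powers of $n$, so the normalized log-probability settles at a finite value instead of diverging to $-\infty$. Nothing in convexity, the normal-cone construction of Definition~\ref{def:truhooconseg}, or Assumption~\ref{assume:3c} supplies the missing penalty, so the approach as described would fail at the decisive step.

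The paper's proof takes a different and non-combinatorial route: it introduces the box $B=\{x\in\bbR^d_+ : x^j\leq x^{*,j},\ j=1,\ldots,d\}$ anchored at the origin with far corner $x^*$, and shows that, conditional on no sample entering $\sC$, all $n$ samples lie in $B$ except on an event whose conditional probability vanishes at rate $\omega(\log n)$ in the log; since the componentwise maximum of points dominated by $x^*$ is itself dominated by $x^*$, the conspiracy event is confined to a null set plus this super-polynomially small exception, and the required $-\infty$ limit follows from an explicit computation with $\bbP(X_1\leq \mathbf A_n x^*)^n$ and the estimate of Lemma~\ref{lem:convex}. (The paper also dismisses your case $E_2\setminus E_1$ outright as impossible, whereas you treat it as a second nontrivial two-sample event; your caution there is defensible for general convex $\sC$, but you do not complete that argument either.) To salvage your decomposition you would have to show, per partition, that the sum of marginal costs exceeds the joint cost by enough to absorb the combinatorial gain in powers of $n$ --- and the block-set example shows that excess can be zero, so a genuinely different mechanism, such as the paper's box argument, is needed.
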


The intuition behind the proof of Lemma~\ref{lem:exp-eq} is the
following: observe that the individual samples can ``conspire" to
create a componentwise extremum that is included in the set $\sC$, {\it
even if none of them are}. However, under the large deviation scaling,
we argue that the only way that the componentwise extremum can lie in
the set, is if it was actually observed in the sample. The proof then
identifies a box that is anchored at the orgin and diagonally across
at $x^*$ such that all the sample values are scaled down to lie
precisely in this box. Once again, it turns out this is only feasible if
$\frac{1}{2}\langle x^*, \mathbf A^{-1} \Sigma^{-1} \mathbf A x^*
\rangle > 1$.

Finally, it follows that the extreme value statistic $M_n$ satisfies an rLDP with rate $J(x^*) = \frac{1}{2} -\frac{1}{2} {\left\langle x^* , \mathbf A^{-1} \Sigma^{-1} \mathbf A x^* \right \rangle}$. 

\begin{theorem}\label{thm:5}
	With the scaling matrix $\mathbf A_n$ and the limit matrix $\mathbf A$ as defined in Assumption~\ref{assume:3} and Assumption \ref{assume:3c}, and $x^*= \underset{x \in \mathcal {C}}{\arg \inf}{\left\langle x , \Sigma^{-1} x \right \rangle}$, the componentwise maximum $M_n$ defined in (\ref{blext}) satisfies an rLDP with rate $J(x^*) := \frac{1}{2} -\frac{1}{2} {\left\langle x^* , \mathbf A^{-1} \Sigma^{-1} \mathbf A x^* \right \rangle}$ and speed $v(a) = a^2$, that is,
  \begin{align}
    \label{eq:39}
    \lim_{n\to\infty} \frac{1}{\|\mathbf A_n\|^2_2} \log \bbP
    \left(\d_\sC\left(\mathbf A_n^{-1} M_n \right) \geq 0 \right)
    = \frac{1}{2} -\frac{1}{2} {\left\langle x^* , \mathbf A^{-1} \Sigma^{-1} \mathbf A x^* \right \rangle}.
  \end{align}

\end{theorem}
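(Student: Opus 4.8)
The plan is to obtain the rLDP for $M_n$ by transporting the rLDP already established for the ``at least one in the set'' event (Lemma~\ref{lem:convex}) across the exponential equivalence of Lemma~\ref{lem:exp-eq}. This is precisely the restricted, one-sided analogue of the standard principle \cite[Thm.~4.2.13]{DeZe2010} that exponentially equivalent families obey identical logarithmic asymptotics, so I would not expect to reprove any concentration estimate here. First I would write $U_n := \d_\sC(\mathbf A_n^{-1} M_n)$ and $V_n := \max_{1\le i\le n}\d_\sC(\mathbf A_n^{-1} X_i)$ and note that both take values in $\{0,-\infty\}$, so that $\{U_n\ge 0\}=\{U_n=0\}$, $\{V_n\ge 0\}=\{V_n=0\}$, and $\{|U_n-V_n|>\gamma\}$ is, for every $\gamma>0$, exactly the event that precisely one of $U_n,V_n$ equals $0$. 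The two inputs are then $\tfrac{1}{\|\mathbf A_n\|^2_2}\log\bbP(V_n\ge 0)\to J(x^*)$ from Lemma~\ref{lem:convex} and $\tfrac{1}{\|\mathbf A_n\|^2_2}\log\bbP(|U_n-V_n|>\gamma)\to -\infty$ from Lemma~\ref{lem:exp-eq}.

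For the upper bound I would fix $\gamma>0$ and use $\{U_n\ge 0\}\subseteq\{V_n\ge 0\}\cup\{|U_n-V_n|>\gamma\}$ (valid because $U_n=0$ together with $V_n=-\infty$ forces $|U_n-V_n|>\gamma$), and then apply a union bound together with $\log(a+b)\le \log 2+\max\{\log a,\log b\}$ to obtain
\begin{align}
\limsup_{n\to\infty}\frac{1}{\|\mathbf A_n\|^2_2}\log\bbP(U_n\ge 0)
\le \max\Big\{\lim_{n\to\infty}\tfrac{1}{\|\mathbf A_n\|^2_2}\log\bbP(V_n\ge 0),\ \lim_{n\to\infty}\tfrac{1}{\|\mathbf A_n\|^2_2}\log\bbP(|U_n-V_n|>\gamma)\Big\}=J(x^*),
\end{align}
the second entry of the maximum being $-\infty$.

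For the lower bound I would use the symmetric inclusion $\{V_n\ge 0\}\subseteq\{U_n\ge 0\}\cup\{|U_n-V_n|>\gamma\}$ to get $\bbP(U_n\ge 0)\ge \bbP(V_n\ge 0)-\bbP(|U_n-V_n|>\gamma)$. Since $\bbP(V_n\ge 0)$ decays at the finite rate $J(x^*)$ whereas $\bbP(|U_n-V_n|>\gamma)$ decays at speed $-\infty$, for all large $n$ the subtracted term is at most $\tfrac12\bbP(V_n\ge 0)$, whence $\bbP(U_n\ge 0)\ge \tfrac12\bbP(V_n\ge 0)$; taking $\tfrac{1}{\|\mathbf A_n\|^2_2}\log(\cdot)$ and using $\|\mathbf A_n\|^2_2\to\infty$ to kill the $\log\tfrac12$ term yields $\liminf_{n\to\infty}\tfrac{1}{\|\mathbf A_n\|^2_2}\log\bbP(U_n\ge 0)\ge J(x^*)$. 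The two bounds together give \eqref{eq:39}.

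The step I expect to require the most care is the lower bound, specifically the justification that the exponentially negligible discrepancy term may simply be discarded: this rests on $J(x^*)>-\infty$, i.e. on the fact (guaranteed by Assumption~\ref{assume:3c} through Lemma~\ref{lem:convex}) that $\bbP(V_n\ge 0)$ does not itself decay at speed $-\infty$, so that the difference $\bbP(V_n\ge 0)-\bbP(|U_n-V_n|>\gamma)$ remains positive and of the same exponential order as $\bbP(V_n\ge 0)$. Beyond this, all the genuine probabilistic and geometric difficulty has already been absorbed into Lemmas~\ref{lem:convex} and~\ref{lem:exp-eq}, and the present argument is just the exponential-equivalence bookkeeping that transfers the rate $J(x^*)$ from the ``at least one in the set'' event to the componentwise maximum $M_n$.
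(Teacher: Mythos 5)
Your proposal is correct and follows exactly the route the paper intends: the paper's own proof of Theorem~\ref{thm:5} is a one-line appeal to Lemmas~\ref{prop:convex}, \ref{lem:convex} and \ref{lem:exp-eq}, and your write-up simply supplies the routine exponential-equivalence bookkeeping (the two inclusions, the union bound for the upper bound, and the discard of the super-exponentially small discrepancy for the lower bound) that the authors leave implicit. No gap; the only substantive hypotheses you rely on, namely the finite rate from Lemma~\ref{lem:convex} and the $-\infty$ rate from Lemma~\ref{lem:exp-eq}, are precisely what those lemmas provide.
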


\begin{proof}
  The proof is a direct consequence of Lemma~\ref{prop:convex}, Lemma~\ref{lem:convex} and Lemma~\ref{lem:exp-eq}.
\end{proof}

Some comments are in order for this result. The constrained quadratic program is determined by
the level sets of the Gaussian distribution, the geometry of the
constraint set $\sC$ and the scaling matrix $\mathbf A$. Intuitively, one should expect an interplay
between the closest point to the boundary of the set $\sC$ and the primary
eigenvector of the Gaussian covariance matrix to determine the most
likely way in which the contours of the (multivariate) Gaussian
density function first touch the set $\sC$. The asymptotic result above
shows that there is a ``collusion" between these quantities, resulting
in a quadratic program that determines the location of the {\it dominating point} $x^*$ on the boundary of the set. Intuitively, the result in Theorem~\ref{thm:5} implies that the dominating point $x^*$ represents the point at which all of the measure concentrates in the  rLDP limit, analogous to \cite{Ne1983} in the random walk case. This insight
could play a crucial role in the design of sampling algorithms for rare
events associated with the excursions of iid Gaussian
processes.

What is the nature of the dominating point $x^* = \underset{x \in \mathcal {C}}{\arg \inf}{\left\langle x , \Sigma^{-1} x \right \rangle}$? Since the optimization problem $\underset{x \in \mathcal {C}}{\arg \inf}{\left\langle x , \Sigma^{-1} x \right \rangle}$ involves minimizing a convex quadratic objective over a convex set $\sC$, the necessary and sufficient condition for a minimum at $x^*$ is that the directional derivative along any feasible direction at $x^*$ is positive, that is, $x = x^*$ is such that 
\begin{equation} \label{fritzkkt} d^T \Sigma^{-1} {\mathbf A} x^* > 0, \mbox{ for all } \, d \in \mathcal{F}(x^*), \nonumber\end{equation} where the feasible set of directions $\mathcal{F}(x^*)$ at $x^*$ is given by $$\mathcal{F}(x^*) : = \bigg\{d: d \neq 0 \mbox{ and } x^* + \lambda d \in \sC \mbox{ for all } \lambda \in (0,\delta) \mbox{ for some } \delta >0\bigg\}.$$ Identifying the point $x^*$ turns out to be especially easy when the set $\sC$ is expressed using functional inequalities and is the subject of convex optimization techniques~\cite{2004boyvan}.

An illuminating special case occurs when the eigen vectors of the covariance matrix $\Sigma$ are oriented such that $\Sigma^{-1} z > 0$ for all $z \in \sC$. In such a case, the point $x^*$ turns out to be the point in the set $\sC$ that is closest to the origin in Euclidean norm. 

\begin{proposition} \label{prop:insight}
  If $\Sigma^{-1} z > 0$ for all $z \in \sC$, then $$x^* := \arg\inf_{x
  \in \sC} \frac{1}{2}\langle x, \mathbf A^{-1} \Sigma^{-1} \mathbf A
x \rangle = \arg\inf_{x
  \in \sC} \frac{1}{2}\langle x, 
x \rangle.$$
\end{proposition}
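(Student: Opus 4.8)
The plan is to combine the first-order optimality characterization recorded just above the statement with the strict convexity of the objectives, and then to verify that the Euclidean nearest point already satisfies the optimality condition of the other program. Each program minimizes a strictly convex quadratic over the closed convex set $\sC$, which excludes the origin (as already noted for the $\Sigma^{-1}$-energy after Lemma~\ref{prop:convex}), so each admits a unique minimizer. Write $\bar{x} := \arg\inf_{x \in \sC} \tfrac{1}{2}\langle x, x\rangle$ for the Euclidean projection of the origin onto $\sC$, and let $x^*$ denote the minimizer of $\tfrac{1}{2}\langle x, \mathbf A^{-1}\Sigma^{-1}\mathbf A x\rangle$. By uniqueness it suffices to show that $\bar{x}$ satisfies the variational inequality for the second program, after which $x^* = \bar{x}$ follows at once.

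First I would record the projection characterization of $\bar{x}$, namely $\langle \bar{x}, x - \bar{x}\rangle \geq 0$ for every $x \in \sC$, equivalently $\langle \bar{x}, d\rangle \geq 0$ for every feasible direction $d \in \mathcal{F}(\bar{x})$. The target condition is $\langle \Sigma^{-1}\bar{x}, x - \bar{x}\rangle \geq 0$ for all $x \in \sC$, where I take $\mathbf A = I$ for transparency (the general diagonal $\mathbf A > 0$ enters only as a positive coordinatewise congruence and is meant to be absorbed into the same sign bookkeeping). The point of the hypothesis is that, since $\bar{x} \in \sC \subset \bbR^d_+$, evaluating $\Sigma^{-1}z > 0$ at $z = \bar{x}$ gives $\Sigma^{-1}\bar{x} > 0$ componentwise; hence $\langle \Sigma^{-1}\bar{x}, x - \bar{x}\rangle \geq 0$ holds as soon as $x - \bar{x} \geq 0$. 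Thus the argument reduces to the purely geometric claim that $\sC \subseteq \bar{x} + \bbR^d_+$, i.e. that $\bar{x}$ is the componentwise least element of $\sC$ and $\mathcal{F}(\bar{x}) \subseteq \bbR^d_+$, in which case the target inequality is immediate and $x^* = \bar{x}$.

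I expect this reduction, and specifically the control of the feasible cone $\mathcal{F}(\bar{x})$, to be the main obstacle. The positivity hypothesis $\Sigma^{-1}z > 0$ fixes the sign of the gradient $\Sigma^{-1}\bar{x}$, but converting this into the sign of the directional derivatives $\langle \Sigma^{-1}\bar{x}, d\rangle$ requires that the feasible directions at $\bar{x}$ be nonnegative, and for a general convex set they need not be. The transparent case, matching the block and polyhedral upper sets that motivate the result, is when the normal cone to $\sC$ at $\bar{x}$ contains $-\bbR^d_+$, so that $\sC$ increases into the positive orthant at $\bar{x}$; there the argument above closes, since $\Sigma^{-1}\bar{x} > 0$ then forces $-\Sigma^{-1}\bar{x}$ into the normal cone. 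The delicate part is to argue, using that $\Sigma^{-1}z > 0$ holds throughout $\sC$ rather than only at $\bar{x}$, that no feasible direction can make a directional derivative negative; reconciling this global positivity with the local geometry at $\bar{x}$ is precisely where the eigenvector-orientation hypothesis must do its work. Once that is established, restoring the diagonal scaling $\mathbf A$ (which only rescales coordinates by positive factors and preserves the relevant sign patterns) and invoking uniqueness of the two minimizers completes the argument.
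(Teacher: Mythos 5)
You have identified the right framework (uniqueness of both minimizers, the projection characterization $\langle \bar{x}, x-\bar{x}\rangle \geq 0$ for all $x\in\sC$, and the target variational inequality $\langle \Sigma^{-1}\bar{x}, x-\bar{x}\rangle \geq 0$), and this matches the paper's directional-derivative argument in spirit. But the proposal stops exactly at the decisive step: you reduce the target inequality to the geometric claim that $\sC \subseteq \bar{x} + \bbR^d_+$, acknowledge that controlling the feasible cone at $\bar{x}$ is ``the main obstacle,'' and never establish it. That claim is the entire content of the proposition, so as written this is not a proof. Worse, the reduction is to something false for the sets the paper has in mind: for the halfspace--orthant intersection $\sC=\{x\geq 0: x_1+x_2\geq 2\}$ one has $\bar{x}=(1,1)$ while $(2,0)\in\sC$, so $\sC\not\subseteq \bar{x}+\bbR^2_+$ and the sufficient condition $x-\bar{x}\geq 0$ is unavailable. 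This route cannot be completed as stated.

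That said, you have put your finger on precisely the point where the paper's own proof is thin. The paper asserts ``$\Sigma^{-1}x>0$ by assumption, for all feasible directions $x$,'' thereby applying a hypothesis about \emph{points} $z\in\sC$ to \emph{directions} $d$ proportional to $z-\bar{x}$, for which $\Sigma^{-1}d=\Sigma^{-1}z-\Sigma^{-1}\bar{x}$ has no definite sign. The gap is real: take $\mathbf A=\mathbf I$, $\sC=\{x\geq 0: x_1+x_2\geq 2\}$ and $\Sigma^{-1}=\left(\begin{smallmatrix}2&1\\1&1\end{smallmatrix}\right)$, which is positive definite and satisfies $\Sigma^{-1}z>0$ for every $z\in\sC$. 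The Euclidean minimizer is $(1,1)$, but along the feasible direction $d=(-1,1)$ the directional derivative of $\tfrac{1}{2}x^{T}\Sigma^{-1}x$ at $(1,1)$ equals $(3,2)\cdot(-1,1)=-1<0$, and indeed the $\Sigma^{-1}$-minimizer is $(0,2)\neq(1,1)$. So the obstacle you flag cannot be overcome without strengthening the hypothesis --- e.g.\ requiring $-\Sigma^{-1}\bar{x}$ to lie in the normal cone to $\sC$ at $\bar{x}$, or $\Sigma^{-1}d\geq 0$ for all $d$ in the tangent cone there; your ``transparent case'' where the normal cone at $\bar{x}$ contains $-\bbR^d_+$ is essentially the condition under which the argument actually closes.
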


\begin{proof}
   Without loss of generality, assume that $\mathbf A = \mathbf I$, the identity
  matrix.

  Consider the following constrained optimization problems:
  \begin{align}\label{cop:P}
    \tag{$P$}
    \min &~\frac{1}{2} x^T x\\
    \notag
    s.t. & ~x \in \sC,
  \end{align}
  where $\sC$ is convex, and
  \begin{align}\label{cop:tilde-P}
    \tag{$\tilde{{P}}$}
    \min &~\frac{1}{2} x^T \Sigma^{-1}x\\
    \notag
    s.t. & ~x \in \sC;
  \end{align}
  recall that $\Sigma^{-1}$ is a positive definite matrix.

  Now, suppose that $x^*$ is the unique solution of~\eqref{cop:P}, and
  assume without loss of generality that $x^* > 0$. Note that this is
  precisely the closest point on the set $\sC$ to the origin (in
  Euclidean distance). We will have shown
  that $x^*$ is also the unique minimizer of~\eqref{cop:tilde-P}
  if we can prove that the directional derivative along any feasible
  direction $x$ (at $x^*$) is strictly positive. In otherwords, $x^*$
  will be the solution to~\eqref{cop:tilde-P} if
  \begin{align}
    \left( \nabla \tilde f(x)\bigg|_{x = x^*}\right)^T x > 0,~\tilde
    f(x) = \frac{1}{2} x^T \Sigma^{-1} x
  \end{align}
  for any direction $x$ that is feasible at $x^*$. That is, we require
  that
  \begin{align*}
    (x^*)^T \Sigma^{-1} x> 0
  \end{align*}
  for each feasible direction $x$ at $x^*$. Since $\Sigma^{-1} x > 0$
  by assumption, for all feasible directions $x$, we conclude that
  $x^*$ is also a minimizer of~\eqref{cop:tilde-P}.
\end{proof}

If the support function of the set $\sC$ is strictly convex, then
$x^*$ is necessarily an extreme point, indicating that the dominating
point is also an extreme point of the set $\sC$. Importantly, and potentially contrary to one's intuition, Proposition \ref{prop:insight} implies that the dominating point $x^* := \arg\inf_{x
  \in \sC} \frac{1}{2}\langle x, \mathbf A^{-1} \Sigma^{-1} \mathbf A
x \rangle$ may not in general coincide with the point that is closest (in Euclidean norm) to the origin. It is in this sense that the structure of the set $\sC$ and the nature of the covariance matrix are in collusion to determine the location of the unique dominating point.

We now present the proofs of the preceding lemmas.

\begin{proof}[Lemma~\ref{prop:convex}]
	
	Since $\Sigma^{-1}$ is a symmetric positive definite matrix,
        there exists a unique $d \times d$ matrix $L$, such that  $L^TL= \Sigma^{-1}$. Observe that 
	\begin{align}\label{reform}
	\bbP\left( \d_\sC\left( \frac{X}{a_n} \right) \geq 0\right) = \bbP\left( \d_{L\sC}\left( \frac{LX}{a_n} \right) \geq 0\right) ,
	\end{align} where the set $L\sC:= \{Ly: y \in \sC \}$ is easily seen to satisfy Assumption~\ref{assume:1}, and $LX$ is a d-dimensional
        standard normal. Hence, considering (\ref{reform}), we will prove the lemma for a standard normal random vector $Z=LX$ on the convex atypical set $L\sC$, giving the dominating point $z^* = \underset{z \in \mathcal {LC}}{\arg \inf}{\left\langle z , z \right
          \rangle}$. 
          
Using the fact that ${z^*}^Tz=\|z^*\|^2$ is a tangent to the ball of radius $\|z^*\|^2$ centered at the origin, and also to the convex set $L\mathcal{C}$ at $z^*$, we see that ${z^*}^Tz = \|z^*\|^2$ is a supporting hyperplane to the set $L\mathcal{C}$ at $z^*$; and since $L\sC$ is atypical, ${z^*}^Tz \geq \|z^*\|^2$ for all $z \in \mathcal L\sC$. It then follows that
	\begin{align} \label{udbsupp}
	\bbP\left( \d_{L\sC} \left( \frac{Z}{a_n} \right) \geq 0\right) & \leq \bbP\left( {{z^*}^TZ}   \geq {a_n} \|z^*\|^2 \right).
	\end{align} Using the Mill's ratio~\cite{Sa1962} for the univariate Gaussian random variable ${z^*}^TZ$ in (\ref{udbsupp}), we get
	\begin{align*}
	\log \bbP\left( \d_{L\sC} \left( \frac{Z}{a_n} \right) \geq 0\right) & \leq  -\frac{1}{2}  a_n^2 \|z^*\|^2   -\log a_n \|z^*\|^2  - \frac{1}{2} \log 2\pi,
	\end{align*} and it follows immediately that
	\begin{align}
	\label{eq:c1}
	\limsup_{n \to \infty} \frac{1}{a_n^2} \log \bbP\left( \d_{L\sC} \left( \frac{Z}{a_n} \right) \geq 0\right) & \leq  -\frac{1}{2} \|z^*\|^2,
	\end{align} thus proving the upper bound.
	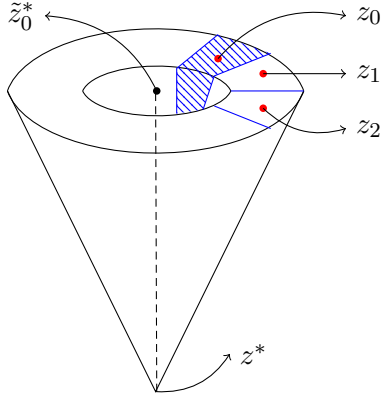
\begin{figure}[t]
  \centering
   \begin{tikzpicture}
    \draw (0,0) arc (170:10:2cm and 1cm)coordinate[pos=0] (a);
    \draw (0,0) arc (-170:-10:2cm and 1cm)coordinate (b);
    \draw (1,0) arc (170:10:1cm and 0.4cm)coordinate[pos=0] (c);
    \draw (1,0) arc (-170:-10:1cm and 0.4cm)coordinate (d);
    \draw[densely dashed] ([yshift=-4cm]$(c)!0.5!(d)$)  -- node[right,font=\footnotesize] {} coordinate[pos=0.95] (aa)($(a)!0.5!(b)$);
    \fill[black] ($(c)!0.5!(d)$) circle [radius=.05];
    \draw[->] ($(c)!0.5!(d)$) to[bend right] ([yshift=1cm]$(a)!0.5!(c)$) node[left] {$\tilde z_0^*$};
    \draw (a) -- ([yshift=-4cm]$(a)!0.5!(b)$) node[right]{} coordinate[pos=0.95]-- (b);
    \draw[->] ([yshift=-4cm]$(a)!0.5!(b)$) to[bend right] ([yshift=-3.5cm]$(a)!0.75!(b)$) node[right] {$z^*$};
    \draw[blue] (2.75,-0.2) -- (3.5,-.5);
    \draw[blue] (d) -- (b);
    \draw[blue] (2.6,-0.25)--(2.75,0.2) -- (3.5,.5);
    \draw[blue] (2.25,-0.3)--(2.25,0.3) -- (2.8,0.75);
    \fill[pattern=north west lines, pattern color=blue] (2.6,-0.25)--(2.75,0.2)--(3.5,.5)--(2.8,0.75)--(2.25,0.3)-- (2.25,-0.3);
    \fill[red] (2.8,0.43) circle [radius=.05];
    \draw[->] (2.8,0.43) to[bend left] (4.5,1.0) node[right] {$z_0$};
    \fill[red] (3.4,0.23) circle [radius=.05];
    \draw[->] (3.4,0.23) to (4.5,0.23) node[right] {$z_1$};
    \fill[red] (3.4,-0.23) circle [radius=.05];
     \draw[->] (3.4,-0.23) to[bend right] (4.5,-0.5) node[right] {$z_2$};
  \end{tikzpicture}
  \caption{The truncated hoop cone $\mathcal{TC}(z^*,z_0,\e/2,\e/4)$ is the blue shaded region. We construct $(n+1)$ truncated hoop cones. These are ``identical" symmetric objects about the $d$th axis.}
\end{figure}\label{hoop-cones}
	
	Let us now prove the lower bound. First observe that since the density of $Z$ is spherically symmetric, we can assume for convenience, and without loss of generality, that the dominating point $z^*$ has components 1 through $d-1$ set to 0, that is, ${z^*}^T e_j = 0$ for $j=1,2, \ldots, d-1$ and ${z^*}^T e_d = \|z^*\|$. (This can be achieved by multiplying $Z$ by an appropriate rotation matrix.) The supporting hyperplane to $L\sC$ at $z^*$ is then perpendicular to the $d$-th axis. Since we have assumed that the set $L\sC$ has an interior, there exists a point $z_0 \neq z^*$ and $\delta > 0$ such that the Euclidean ball $$B(z_0, \delta) = \bigg\{z: \|z-z_0\| \leq \delta \bigg\} \subset L\sC.$$ Let $\tilde{z}_0$ be the projection of $z_0$ on the $d$-th axis, that is, $\tilde{z}_0^Te_i = 0$ for $i=1,2,\ldots,d-1$ and $\tilde{z}_0^Te_d = \|z_0\|$. Now, since $L\sC$ is convex, we can choose $\epsilon>0$ such that the trucated hoop-cone segment $\mathcal{TC}(z^*, z_0, \epsilon/2, \epsilon/4)$ having vertex $z^*$, offset $z_0$, width $\epsilon/2$ and arc-length $\epsilon/4$ satisfies \begin{equation}\label{canonicalobj} \mathcal{TC}(z^*, z_0, \epsilon/2, \epsilon/4) \subset L\sC. \end{equation} (See Definition~\ref{def:truhooconseg} for a precise definition of the truncated hoop-cone segment.) Furthermore, identify offsets $z_1, z_2, \ldots, z_n$ so that the truncated hoop-cone segments $\mathcal{TC}(z^*, z_i, \epsilon/2, \epsilon/4), i = 0,1,2,\ldots,n$ are all symmetrically located about $\tilde{z}_0$. (See Figure~\ref{hoop-cones} for the truncated hoop-cone segments $\mathcal{TC}(z^*, z_i, \epsilon/2, \epsilon/4), i=0,1,,\ldots,n$.) Due to  (\ref{canonicalobj}), and since the construction is such that the truncated hoop-cone segments are symmetric about the $d$-th axis, we see that \begin{equation}\label{lblb} \bbP\left(\frac{Z}{a_n} \in  L\sC\right) \geq  \bbP\left(\frac{Z}{a_n} \in  \mathcal{TC}(z^*, z_0, \epsilon/2, \epsilon/4)\right) = \frac{1}{n+1}\bbP\left(\frac{Z}{a_n} \in  \bigcup_{i=0}^n \mathcal{TC}(z^*, z_i, \epsilon/2, \epsilon/4)\right).
	\end{equation} We will now characterize $\bbP\left(\frac{Z}{a_n} \in  \bigcup_{i=0}^n \mathcal{TC}(z^*, z_i, \epsilon/2, \epsilon/4)\right)$ appearing in (\ref{lblb}) to establish the needed lower bound. For this, notice that\begin{align}\label{hoopconeunion} \MoveEqLeft \bigcup_{i=0}^n \mathcal{TC}(z^*, z_i, \epsilon/2, \epsilon/4) = & \nonumber \\ & \mathcal{N}\left(z^*,\tilde{z}_0 + (1+\frac{\epsilon}{4})(z_0 - \tilde{z}_0)\right) \setminus \mathcal{N}\left(z^*,\tilde{z}_0 + (1-\frac{\epsilon}{4})(z_0 - \tilde{z}_0)\right) \setminus \left\{ z: z^Te_d > \|\tilde{z}_0^*\|\right\}. 
	\end{align} where $\mathcal{N}(z^*,y)$ is the Euclidean normal cone with vertex $z^*$ and having boundary passing through the point $y \in \bbR^d$. (See Definition~\ref{def:truhooconseg} for a precise definition of the normal cone $\mathcal{N}(z^*,y)$.) 
	
	Defining the slopes $\kappa_{\mbox{\scriptsize outer}} = \frac{\|\tilde{z}_0 - z^*\|}{(1+\epsilon/4)\|z_0 - \tilde{z}_0\|}, \kappa_{\mbox{\scriptsize inner}} = \frac{\|\tilde{z}_0 - z^*\|}{(1-\epsilon/4)\|z_0 - \tilde{z}_0\|}$ of the outer and inner cones, respectively, we can write the equations of the truncated outer cone $\bar{\sC}_{\mbox{\scriptsize outer}}$ and the truncated inner cone $\bar{\sC}_{\mbox{\scriptsize inner}}$ as follows.   \begin{equation}\label{cart1} \bar{\sC}_{\mbox{\scriptsize outer}} := \bigg\{(x,y) \in \bbR^{d-1} \times \bbR: \|z^*\| + \kappa_{\mbox{\scriptsize outer}} \|x\| \leq y  \leq  \|\tilde{z}_0\| \bigg \}
	\end{equation} and \begin{equation}\label{cart2} \bar{\sC}_{\mbox{\scriptsize inner}} := \bigg\{(x,y) \in \bbR^{d-1} \times \bbR: \|z^*\| + \kappa_{\mbox{\scriptsize inner}} \|x\| \leq y  \leq  \|\tilde{z}_0\| \bigg\}.\end{equation} Combining (\ref{hoopconeunion}), (\ref{cart1}), and (\ref{cart2}), we can write \begin{align}\label{hoopconeunionexp2} \bigcup_{i=0}^n \mathcal{TC}(z^*, z_i, \epsilon/2, \epsilon/4) = \bar{\sC}_{\mbox{\scriptsize outer}} \setminus \bar{\sC}_{\mbox{\scriptsize inner}}. 
	\end{align} Denoting $Z= ( Z_1, Z_2, Z_3 \ldots ,Z_d )$, $Z_{-d}= ( Z_1, Z_2, Z_3 \ldots,Z_{d-1}  )$, and $t^* := \frac{\|\tilde{z}_0\| - \|z^*\|}{\kappa_2}$, the representation in (\ref{hoopconeunionexp2}) along with (\ref{lblb}) implies
	\begin{align} 
	\nonumber
	\bbP\left( \d_{L\sC} \left( \frac{Z}{a_n} \right) \geq 0\right) \geq & \frac{1}{n+1}\bbP\left( \d_{\bar{\sC}_{\mbox{\scriptsize outer}} \setminus \bar{\sC}_{\mbox{\scriptsize inner}} }\left( \frac{Z}{a_n} \right) \geq 0\right),\\ 
	\geq & \frac{1}{n+1}\int_0^{t^* } \bbP \left( a_n \|z^*\| +\kappa_{\mbox{\scriptsize outer}} a_n t \leq Z_d \leq a_n \|z^*\| + \kappa_{\mbox{\scriptsize inner}}a_n t \right) \bbP_{\chi_{d-1}}(dt),
	\label{lbsetup} 
	\end{align}
	where $\chi_{d-1}$ is a Chi random variable with $(d-1)$ degrees of freedom. Applying the Mill's ratio~\cite{Sa1962} to the integrand in (\ref{lbsetup}), we get \begin{align} \label{lbsetup2}
	\nonumber
	\MoveEqLeft \bbP\left( \d_{L\sC} \left( \frac{Z}{a_n} \right) \geq 0\right) & \\ & \geq   \frac{1}{\sqrt{2\pi}} \frac{(n+1)^{-1}}{a_n(\|z^*\| + \kappa_{\mbox{\scriptsize inner}} t^*)} \int_0^{t^* } \left(\exp\{-\frac{1}{2}a_n^2(\|z^*\| + \kappa_{\mbox{\scriptsize outer}}t)^2\} -  \exp\{-\frac{1}{2}a_n^2(\|z^*\| + \kappa_{\mbox{\scriptsize inner}}t)^2\} \right) \bbP_{\chi_{d-1}}(dt) \nonumber \\
	& = \frac{1}{\sqrt{2\pi}} \frac{(n+1)^{-1}}{a_n(\|z^*\| + \kappa_{\mbox{\scriptsize inner}} t^*)} \exp\bigg\{-\frac{1}{2}a_n^2\|z^*\|^2\bigg\} I_n,
	\end{align} where the asymptotic \begin{align} \label{remconst} I_n & := \int_0^{t^* } \left(\exp\bigg\{-\frac{1}{2}a_n^2(\kappa_{\mbox{\scriptsize outer}}^2t^2 + 2\kappa_{\mbox{\scriptsize outer}}t\|z^*\| )\bigg\} -  \exp\bigg\{-\frac{1}{2}a_n^2(\kappa_{\mbox{\scriptsize inner}}^2t^2 + 2\kappa_{\mbox{\scriptsize inner}}t\|z^*\|)\bigg\} \right) \bbP_{\chi_{d-1}}(dt) \nonumber \\ & \sim \left(\frac{2(d-2)!}{2^{(d-1)/2}\Gamma\left(\frac{d-1}{2}\right)} \right) \left(\frac{1}{\kappa_{\mbox{\scriptsize outer}}^{d-1}} - \frac{1}{\kappa_{\mbox{\scriptsize inner}}^{d-1}}  \right) \left(\frac{1}{\|z^*\|a_n^{2}}\right)^{d-1} \end{align} can be seen after some calculation. From (\ref{remconst}) and (\ref{lbsetup2}), we see that 
\begin{align}
	\label{lowerbd}
	\liminf_{n \to \infty} \frac{1}{a_n^2} \log \bbP\left( \d_{L\sC} \left( \frac{Z}{a_n} \right) \geq 0\right) & \geq  -\frac{1}{2} \|z^*\|^2_2,
	\end{align} proving the lower bound. 
\end{proof}

\begin{proof}[Lemma~\ref{lem:convex}]
	Observe that independence implies 
	\begin{align*}
	\bbP\left( \max_{1\leq i\leq n} \d_\sC\left( \mathbf A_n^{-1} X_i
	\right) \geq 0\right) &= 1 - \bbP\left( \d_\sC\left( \mathbf
                                A_n^{-1} X_1\right) \leq 0 \right)^n\\
          &= \bbP\left( \d_\sC(\mathbf A_n^{-1} X_1 \geq 0)\right)
            \sum_{i=0}^{n-1}\bbP\left( \d_{\sC}(\mathbf
            A_n^{-1} X_1) < 0 \right)^{i}.
	\end{align*}
	The obvious probability upper bound $\sum_{i=0}^{n-1}\bbP\left( \d_{\sC}(\mathbf
            A_n^{-1} X_1) < 0 \right)^{i} \leq n$ together with
        Assumption~\ref{assume:3} and Lemma~\ref{prop:convex} automatically imply that
	\begin{align}
	\label{eq:38}
	\limsup_{n\to\infty} \frac{1}{\|\mathbf A_n\|_2^2} \log \bbP\left( \max_{1\leq i\leq n} \d_\sC\left( \mathbf A_n^{-1} X_i\right) \geq 0\right)  \leq \frac{1}{2} -\frac{1}{2} {\left\langle x^* , \mathbf A^{-1} \Sigma^{-1} \mathbf A x^* \right\rangle};
	\end{align}
        (Note that while the sequence $a_n$ in Lemma~\ref{prop:convex}
        is arbitrary, minor algebra is required to extend that result
        to a limit for $\mathbf A_n^{-1} X_1$.)

        Next, for the lower bound we have
        \begin{align*}
          \bbP\left( \max_{1\leq i\leq n} \d_\sC\left( \mathbf A_n^{-1} X_i
	\right) \geq 0\right) &\geq \bbP\left( \d_\sC(\mathbf A_n^{-1}
                                X_1 \geq 0)\right) \times n \bbP\left( \d_{\sC}(\mathbf
            A_n^{-1} X_1) < 0 \right)^{n-1}.
        \end{align*}
        Observe that
        \begin{align*}
          (n-1)\log \bbP\left(\d_\sC(\mathbf A_n^{-1} X_1) < 0\right) = (n-1)
          \log \left(1-\bbP\left(\d_\sC(\mathbf A_n^{-1} X_1) \geq 0\right) \right) \in o(1).
        \end{align*}
        To see this, Taylor expand the $\log$ term to obtain
        \begin{align*}
           \log \left(1-\bbP\left(\d_\sC(\mathbf A_n^{-1} X_1) \geq
          0\right) \right) &= - \bbP\left(\d_\sC(\mathbf A_n^{-1} X_1) \geq
          0\right) - \frac{1}{2} \bbP\left(\d_\sC(\mathbf A_n^{-1} X_1) \geq
          0\right)^2 - \cdots\\
          &\asymp - \exp\left( - \frac{\|\mathbf A_n\|_2^2}{2} \a
            \right) - \frac{1}{2} \exp\left( -\|\mathbf A_n\|_2^2
            \a\right) - \cdots\\
          &\sim - \exp\left(-\a \log n \right) - \frac{1}{2}\exp\left(
            - 2 \a \log n\right) - \cdots\\
          &= - \frac{1}{n^\a} - \frac{1}{2 n^{2\a}} - \cdots,
        \end{align*}
        where $\a = \frac{1}{2} \langle x^*, \mathbf A^{-1} \Sigma^{-1}
        \mathbf A x^* \rangle$ and $\asymp$ represents the order of
        magnitude estimate; that is $f(s) \asymp g(s)~s \in S$ is
        equivalent to $f(s) \in O(g(s))$ and $g(s) \in O(f(s))$. The
        order of magnitide approximation in the
        second display follows from Lemma~\ref{prop:convex}, while
        Assumption~\ref{assume:3} implies the third
        display. Assumption~\ref{assume:3c} implies that $\a >
          1$, so that
          \begin{align}\label{eq:lb-approx}
            (n-1) \log \bbP\left(\d_\sC(\mathbf A_n^{-1} X_1) <
            0\right) \asymp - \frac{(n-1)}{n^\a} - \frac{n-1}{2
            n^{2\a}} - \cdots \in o(1).
          \end{align}

          Finally, observe that
          \begin{align*}
            	\log \bbP\left( \max_{1\leq i\leq n} \d_\sC\left( \mathbf
            A_n^{-1} X_i\right) \geq 0\right) \geq \log &\bbP\left(
            \d_\sC(\mathbf A_n^{-1} X_1) \geq 0\right)\\&\qquad + (n-1) \log \bbP\left( \d_{\sC}(\mathbf
            A_n^{-1} X_1) < 0 \right) + \log n.
          \end{align*}
          Lemma~\ref{prop:convex},~\eqref{eq:lb-approx} and
          Assumption~\ref{assume:3} together imply that
          \begin{align*}
            \liminf_{n\to\infty}\frac{1}{\|\mathbf A_n\|_2^2} \log \bbP\left( \max_{1\leq i\leq n} \d_\sC\left( \mathbf
            A_n^{-1} X_i\right) \geq 0\right) \geq \frac{1}{2} - \frac{1}{2} {\left\langle x^* , \mathbf A^{-1} \Sigma^{-1} \mathbf A x^* \right \rangle}.
          \end{align*}
\end{proof}

\begin{proof}[Lemma~\ref{lem:exp-eq}]
  Observe that
  \begin{align*}
    \max_{1 \leq i \leq n} \left\{\d_\sC\left(\mathbf A_n^{-1} X_i
    \right) - \d_\sC\left(\mathbf A_n^{-1} M_n\right) \right \} > \g
  \end{align*}
  is impossible by definition, since this would entail $M_n$ being outside
  $\sC$, while the sample values $X_1,\ldots$ $, X_n \in \sC$. Thus, the only case to consider is
  where
  \begin{align*}
    \max_{1 \leq i \leq n} \left\{\d_\sC\left(\mathbf A_n^{-1} X_i
    \right) - \d_\sC\left(\mathbf A_n^{-1} M_n\right) \right \} < -\g.
  \end{align*}
This can occur if and only if $X_1,\ldots,X_n \not \in \sC$ are such that $M_n
\in \sC$. That is, the sample points `conspire' to ensure that the
componentwise maximum is in the set $\sC$. That is,
\begin{align*}
  \left\{ \max_{1 \leq i \leq n} \left\{\d_\sC\left(\mathbf A_n^{-1} X_i
    \right) - \d_\sC\left(\mathbf A_n^{-1} M_n\right) \right \} <
  -\g\right\} = \left \{ \max_{1 \leq i\leq n} \d_\sC\left( \mathbf
  A_n^{-1} X_i\right) = -\infty, \d_\sC\left(\mathbf A_n^{-1} \bar
  X_n\right) = 0 \right\}.
\end{align*}
Now, let $B := \{x \in \bbR^d_+ : x^j \leq x^{*,j}, ~j=1,\ldots,d\}$,
where $x^* := \arg \inf_{x \in \sC} \frac{1}{2} \langle x, \mathbf
A^{-1}\Sigma^{-1} \mathbf A x \rangle$. For the moment, suppose we establish that
\begin{align}
  \label{eq:43}
  \bbP \left( \min_{1\leq i\leq n} \d_B\left( \mathbf A_n^{-1} X_i\right) = -\infty ~\bigg| \max_{1 \leq i \leq n} \d_\sC\left(\mathbf A_n^{-1} X_i
    \right) = -\infty\right) \to 0 ~\text{as}~n\to\infty;
\end{align}
that is if the sample values are outside the set $\sC$, then with high
probability they will cluster near the origin, in the set $B$. As a
consequence, it follows that
  \begin{align}
  \bbP\left( \d_B\left(\mathbf A_n^{-1} M_n \right) = -\infty ~\bigg| \max_{1 \leq i \leq n} \d_\sC\left(\mathbf A_n^{-1} X_i
    \right) = -\infty \right) \to 0~\text{as}~n\to\infty.\label{eq:44}
  \end{align}
  Observe that
  \begin{align*}
  &\bbP \left( \max_{1 \leq i\leq n} \d_\sC\left( \mathbf
  A_n^{-1} X_i\right) = -\infty, \d_\sC\left(\mathbf A_n^{-1} \bar
  X_n\right) = 0\right)\\ &= \bbP \left( \min_{1\leq i\leq n}\d_B\left(\mathbf A_n^{-1} X_i \right) = 0,~ \max_{1 \leq i\leq n} \d_\sC\left( \mathbf
  A_n^{-1} X_i\right) = -\infty, ~\d_\sC\left(\mathbf A_n^{-1} \bar
  X_n\right) = 0\right)\\ &\qquad+ \bbP \left( \min_{1\leq i\leq n}\d_B\left(\mathbf A_n^{-1} X_i \right) = -\infty,~\max_{1 \leq i\leq n} \d_\sC\left( \mathbf
  A_n^{-1} X_i\right) = -\infty,~\d_\sC\left(\mathbf A_n^{-1} \bar
  X_n\right) = 0\right)\\
&\leq \bbP\left(\min_{1\leq i\leq n}\d_B\left(\mathbf A_n^{-1} X_i \right) = -\infty,~\max_{1 \leq i\leq n} \d_\sC\left( \mathbf
  A_n^{-1} X_i\right) = -\infty \right),
  \end{align*}
  where the first term of the first display has measure zero since it
  is impossible that all the sample points are in the set $B$ (i.e.,
  $\min_{1\leq i\leq n} \d_B(\mathbf A_n^{-1} X_i) = 0$) and yet the
  componentwise maximum is in $\sC$ (i.e., $\d_\sC\left(\mathbf A_n^{-1} \bar
    X_n\right) = 0$). Now, Lemma~\ref{lem:convex}~and~\eqref{eq:43} together imply that
\begin{align*}
  \limsup_{n\to\infty} \bbP\left(\min_{1\leq i\leq n}\d_B\left(\mathbf A_n^{-1} X_i \right) = -\infty,~\max_{1 \leq i\leq n} \d_\sC\left( \mathbf
  A_n^{-1} X_i\right) = -\infty \right) = 0.
\end{align*}
Therefore, it follows automatically that
\begin{align*}
  \limsup_{n\to\infty} \frac{1}{\|\mathbf A_n\|_2^2} \log \bbP\left( \max_{1 \leq i\leq n} \d_\sC\left( \mathbf
  A_n^{-1} X_i\right) = -\infty, \d_\sC\left(\mathbf A_n^{-1} \bar
  X_n\right) = 0\right) = -\infty,
\end{align*}
provided the convergence in~\eqref{eq:43} and~\eqref{eq:44} is
$\omega(\log n)$, completing the proof. Here, $f(n) \in \omega(g(n))$
is taken to mean that for all $k > 0$ and large enough $n$, $|f(n)| \geq k |g(n)|$.

Now, to see that the convergence in~\eqref{eq:43} (and
hence~\eqref{eq:44}) is `fast enough,' note that
\begin{align*}
&\bbP \left( \min_{1 \leq i\leq n}\d_B\left( \mathbf A_n^{-1} X_i\right) = 0 ~\bigg| \max_{1 \leq i \leq n} \d_\sC\left(\mathbf A_n^{-1} X_i
    \right) = -\infty\right) = \frac{\bbP\left(\min_{1\leq
                                 i\leq n} \d_B\left( \mathbf A_n^{-1}
                                 X_i\right) = 0\right)}{\bbP\left( \max_{1 \leq i \leq n} \d_\sC\left(\mathbf A_n^{-1} X_i
    \right) = -\infty\right)}.
\end{align*}

Taking the logarithm, we obtain
\begin{align*}
  \log \bbP&\left(\min_{1 \leq i\leq n} \d_B\left( \mathbf
  A_n^{-1} X_i\right) = -\infty ~\bigg | \max_{1 \leq i \leq n}\d_\sC\left(\mathbf A_n^{-1} \bar
  X_i\right) = -\infty\right)\\ \qquad \qquad &= \log\left(1 - \frac{\bbP\left(\min_{1\leq
                                 i\leq n} \d_B\left( \mathbf A_n^{-1}
                                 X_i\right) = 0\right)}{\bbP\left( \max_{1 \leq i \leq n} \d_\sC\left(\mathbf A_n^{-1} X_i
                                \right) = -\infty\right)}\right)
\end{align*}
Now, the i.i.d. assumption implies that $\bbP\left( \min_{1\leq
                                 i\leq n} \d_B\left( \mathbf A_n^{-1}
                                 X_i\right) = 0 \right) = \bbP(X_1
                             \leq \mathbf A_n x^*)^n$, and 
                             Lemma~\ref{prop:convex} yields
                             $
                               \bbP\left( X_1 \leq \mathbf A_n x^*
                               \right) \asymp \left( 1 - \exp(- \a_n
                               \log n)\right),
                             $
                             ~where $\a_n =\frac{1}{2} \langle x^*,
                             \mathbf A_n^{-1} \Sigma^{-1} \mathbf
                             A_n x^*\rangle$. Similarly,
                             Lemma~\ref{lem:convex} implies that
                             \begin{align*}
                               \bbP\left( \max_{1 \leq i \leq n} \d_\sC\left(\mathbf A_n^{-1} X_i
    \right) = -\infty\right) &= 1 - \bbP\left( \max_{1 \leq i \leq n}
                               \d_\sC(\mathbf A_n^{-1} X_i) \geq 0
                               \right)  \\ &\asymp 1 - \exp((\a_n-1)\log n).
                             \end{align*}
                             Note that we have implicitly used
                             Assumption~\ref{assume:3} in the
                             preceding approximations. Furthermore, by Assumption~\ref{assume:3} it follows
                             that $\a_n \to \a := \frac{1}{2}\langle
                             x^*, \mathbf A^{-1} \Sigma^{-1} \mathbf A
                             x^*\rangle > 1$ as $n \to \infty$. Therefore, we have
                             \begin{align*}
                                 \log \bbP\left(\min_{1 \leq i\leq n} \d_B\left( \mathbf
  A_n^{-1} X_i\right) = -\infty ~\bigg | \max_{1 \leq i \leq n}\d_\sC\left(\mathbf A_n^{-1} \bar
  X_i\right) = -\infty\right) \asymp \log \left( 1 - \frac{\left(1 -
                                            n^{-\a_n} \right)^n}{1 -
                                            n^{\a_n - 1}}\right).
                             \end{align*}
                             It can be seen that
\begin{align*}
   \log \left( 1 - \frac{\left(1 - n^{-\a_n} \right)^n}{1 -n^{\a_n -
  1}}\right) & =\log \left(n^{\a_n n} (n^{\a_n-1}-1) - (n^{\a_n}-1)^n
               n^{\a_n-1}\right) - \log \left( n^{\a_n n}(n^{\a_n-1} -
               1) \right)\\
  & \in \omega(\log n),
\end{align*}
thereby completing the proof.
\end{proof}

\section{An rLDP for Gaussian Extrema on Polyhedral Sets}

In this section, we derive the asymptotic likelihood that the componentwise maximum $M_n$ lies in an atypical polyhedral set, as depicted in
Figure~\ref{fig:2}. Polyhedral atypical sets arise quite often in
statistical studies of rare events --- for instance~\cite{DeSi1999}
study the estimation of rare events, and present an application where
the ``failure" region is a halfspace, a special case of the results in this section.

The main result in this section will utilize an rLDP for Gaussian
extremes on \emph{block sets} of the type $\sC := \{x \in \bbR^d:
e_j^Tx^* \geq e_j^Tx^*, j=1,2,\ldots,d\}$. Block sets have been considered recently~\cite{Has2005,HasHu2003} in establishing tail probability bounds for a single Gaussian random vector, i.e., in establishing bounds for $\bbP(X > x)$.

In order to establish the rLDP for a block set, we first observe that
\begin{align*}
  \{\d_\sC(M_n) \geq 0\} &=\{M_n \in \sC := \{y \in \bbR^d :
                     y > x\}\}\\
  &= \{M_n > x\}.
\end{align*}
The result then follows as a simple consequence of Theorem~\ref{thm:5}.

\begin{proposition} \label{prop:boxes}
  Suppose Assumption~\ref{assume:3} and Assumption~\ref{assume:3c} are
  satisfied. Then,
  \begin{align}
       \lim_{n\to\infty} \frac{1}{\|\mathbf A_n\|^2_2} \log \bbP
    \left(M_n > \mathbf A_n x\right)
    = \frac{1}{2} -\frac{1}{2} {\left\langle x , \mathbf A^{-1} \Sigma^{-1} \mathbf A x \right \rangle}.
  \end{align}
\end{proposition}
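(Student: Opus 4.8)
The plan is to obtain Proposition~\ref{prop:boxes} as a direct specialization of Theorem~\ref{thm:5} to the block set $\sC := \{y \in \bbR^d : y \geq x\}$, the closure of $\{y > x\}$, whose single vertex is $x$. First I would record the event identity already flagged above: since $\mathbf A_n = \textsc{diag}(a_{n,1},\ldots,a_{n,d})$ with each $a_{n,j} > 0$, componentwise monotonicity gives $\{M_n > \mathbf A_n x\} = \{\mathbf A_n^{-1} M_n > x\} = \{\mathbf A_n^{-1} M_n \in \sC\} = \{\delta_\sC(\mathbf A_n^{-1} M_n) \geq 0\}$, so the probability in the proposition is exactly the object controlled by Theorem~\ref{thm:5} (the strict versus non-strict boundary is immaterial because $M_n$ has a density, so $\partial\sC$ carries zero mass). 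Next I would verify that $\sC$ satisfies the standing hypotheses: the box has nonempty interior and has $x$ as an extreme point, so Assumption~\ref{assume:1}(1)--(2) hold, while atypicality ($0 \notin \sC$) holds as soon as $x$ has a strictly positive component; Assumption~\ref{assume:3} and Assumption~\ref{assume:3c} are assumed outright. Theorem~\ref{thm:5} then applies verbatim and returns the limit
$$\tfrac{1}{2} - \tfrac{1}{2}\langle x^*, \mathbf A^{-1}\Sigma^{-1}\mathbf A x^*\rangle, \qquad x^* = \arg\inf_{z \in \sC}\langle z, \Sigma^{-1} z\rangle.$$

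The one genuine step, and the one I expect to be the main obstacle, is to show that the dominating point $x^*$ of this particular set coincides with the vertex $x$, so that the rate may be written in terms of $x$ as claimed. I would argue this through the optimality characterization recorded immediately after Theorem~\ref{thm:5}: $x^*$ minimizes $\langle z, \Sigma^{-1} z\rangle$ over $\sC$ iff the directional derivative $\langle \Sigma^{-1} x^*, d\rangle \geq 0$ for every feasible direction $d \in \mathcal{F}(x^*)$. At the vertex $x$ the feasible cone is precisely the nonnegative orthant $\{d : d \geq 0\}$, so the condition collapses to $\langle \Sigma^{-1} x, d\rangle \geq 0$ for all $d \geq 0$, i.e. to $\Sigma^{-1} x \geq 0$ componentwise. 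Hence, whenever $\Sigma^{-1} x \geq 0$ (the vertex analogue of the condition appearing in Proposition~\ref{prop:insight}), the vertex is the constrained minimizer and $x^* = x$.

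Finally I would substitute $x^* = x$ into the rate delivered by Theorem~\ref{thm:5} to obtain exactly
$$\lim_{n\to\infty} \frac{1}{\|\mathbf A_n\|_2^2}\log \bbP(M_n > \mathbf A_n x) = \tfrac{1}{2} - \tfrac{1}{2}\langle x, \mathbf A^{-1}\Sigma^{-1}\mathbf A x\rangle,$$
which is the assertion. I would stress that the delicate point is genuinely the identification $x^* = x$: as the discussion around Proposition~\ref{prop:insight} warns, the vertex need not be the point of $\partial\sC$ nearest the origin, so if $\Sigma^{-1} x$ has a strictly negative component the true dominating point migrates onto a lower-dimensional face of the box and the rate is governed by that $x^*$ rather than by $x$ itself. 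Thus the clean form stated in the proposition is the one corresponding to the regime in which the vertex is active in the underlying quadratic program, and I would either state $\Sigma^{-1} x \geq 0$ explicitly or read the conclusion with $x$ understood to be the vertex-dominating case.
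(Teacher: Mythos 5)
Your proposal follows the same route as the paper's own proof: rewrite $\{M_n > \mathbf A_n x\}$ as the inclusion of $\mathbf A_n^{-1}M_n$ in the block set $\sC = \{y : y \geq x\}$ and specialize Theorem~\ref{thm:5}. Where you differ is in the one substantive step, and your extra care there is warranted. The paper disposes of the identification of the dominating point in a single unjustified sentence (``Observe that $x = \arg\inf_{y\in\sC}\tfrac12\langle y,\mathbf A^{-1}\Sigma^{-1}\mathbf A y\rangle$''), whereas you correctly note that the vertex of the box minimizes the quadratic form over $\{y \geq x\}$ only when the gradient at the vertex lies in the dual of the feasible cone, i.e.\ only when $\Sigma^{-1}x \geq 0$ componentwise (or the $\mathbf A$-weighted analogue). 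If $\Sigma^{-1}x$ has a negative component --- which happens for perfectly ordinary positive-definite $\Sigma$ with suitable off-diagonal signs --- the true dominating point sits on a proper face of the box, Theorem~\ref{thm:5} returns a strictly smaller quadratic value, and the displayed rate $\tfrac12 - \tfrac12\langle x,\mathbf A^{-1}\Sigma^{-1}\mathbf A x\rangle$ is wrong. So your proof is correct under the explicit hypothesis $\Sigma^{-1}x\geq 0$ that you isolate, and in flagging that hypothesis you have exposed a genuine gap in the paper's own argument rather than introduced one of your own; the minor remaining housekeeping (atypicality of the box requires $x$ to have a positive component so that $0\notin\sC$, and the open-versus-closed box distinction is harmless since the Gaussian law puts no mass on the boundary) is handled adequately.
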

\begin{proof}
Observe that $x = \arg\,\inf_{y \in \sC} \frac{1}{2} \langle y,
\mathbf A^{-1} \Sigma^{-1} \mathbf A y\rangle$. The final
expression follows automatically from Theorem~\ref{thm:5}.
\end{proof}

Towards constructng an rLDP for polyhedral sets, consider the $m\times d$ matrix
\begin{align*}
  \mathbf B = 
  \begin{bmatrix}
    & -b_1^T-& \\
    & \vdots& \\
    & -b_m^T- & 
  \end{bmatrix}.
\end{align*} Analogous to Assumption~\ref{assume:3}, we
suppose that there exists a sequence of matrices 
\begin{align*}
\mathbf B_n = \begin{bmatrix}
    & -b_{n,1}^T-& \\
    & \vdots& \\
    & -b_{n,m}^T- & 
  \end{bmatrix},
\end{align*}
and vectors $\mathbf c_n=(c_{n,1},\ldots,c_{n,m})$ such that
\begin{assumption}~\label{assume:4b}
  \begin{align}
    \label{eq:19}
    \left(\frac{\mathbf B_n}{\|\mathbf B_n\|}, \frac{\mathbf c_n}{\|\mathbf
    B_n\|} \right) &\to \left(\mathbf B,\mathbf c \right),\\
    \frac{\|\mathbf B_n\|^2_2}{2\log n} &\to 1, ~\text{as}~n\to\infty.
  \end{align}
\end{assumption} We are then interested in the
event 
\begin{align}
\left\{ M_n = \max_{1 \leq i \leq n} X_i \in \mathcal C^n_{\mathbf B_n}
  \right\} &= \bigcap_{j=1}^d \left\{b_{n,j}^T M_n >
             c_{n,j}  \right\}\\
  &= \left\{ \mathbf B_n M_n > \mathbf c_n \right\}, \label{eq:15}
\end{align}
where $\mathcal
C_{\mathbf B_n}$ is a convex open set with boundary determined by the
piecewise linear curve
$\partial \mathcal C^n_{\mathbf B_n}(x) = \max_{1 \leq j \leq m} \{b_{n,j}^T x
- c_{n,j}\}$ for all $x \in \mathbb R^d$. \begin{figure}[h]\centering
  \begin{subfigure}[b]{0.45\textwidth}
  \begin{tikzpicture}[
    thick, >=stealth', dot/.style =
      { draw, fill = none, circle, inner sep = 0pt, minimum size = 10pt
      }]
      \coordinate(0) at (0,0);
      \draw[->] (-0.3,0) -- (6,0) node[below] {$X_1$};
      \draw[->] (0,-0.3) -- (0,4) node[left] {$X_2$};
      \draw[-,dashed] (0,3) -- (2,0); \draw[color=red] (0,3) -- (5/4,9/8) coordinate[midway] (M1);
      \draw[-,dashed] (0,1.5) -- (5,0); \draw[color=red] (5/4,9/8) -- (5,0) coordinate[midway] (M2);
      \fill[pattern = north west lines,pattern color=blue,dashed] (0,4) -- (0,3) -- (5/4,9/8) -- (5,0) -- (6,0) -- (6,4) -- (0,4);\
      \draw[->] ($(M1)!0.5cm!90:(0,3)$) -- ($(M1)!0.5cm!270:(0,3)$) node[above] {$b_1$};
      \draw[->] ($(M2)!0.5cm!270:(5,0)$) -- ($(M2)!0.5cm!90:(5,0)$) node[above] {$b_2$};
  \end{tikzpicture}
  \caption{Full rank case. }
  \label{fig:2a}
  \end{subfigure}
  ~
  \begin{subfigure}[b]{0.45\textwidth}
    \begin{tikzpicture}[
    thick, >=stealth', dot/.style =
      { draw, fill = none, circle, inner sep = 0pt, minimum size = 10pt
      }]
      \coordinate(0) at (0,0);
      \draw[->] (-0.3,0) -- (6,0) node[below] {$X_1$};
      \draw[->] (0,-0.3) -- (0,4) node[left] {$X_2$};
      \draw[-,dashed] (0,3.5) -- (1.5,0); \draw[color=red] (0,3.5) -- (.75,1.75) coordinate[midway] (M1);
      \draw[-,dashed] (0,2.5) -- (2.5,0); \draw[color=red] (.75,1.75) -- (15/8,5/8) coordinate[midway] (M2);
      \draw[-,dashed] (0,1.0) -- (5,0); \draw[color=red] (15/8,5/8) -- (5,0) coordinate[midway] (M3);
      \fill[pattern = north west lines,pattern color=blue,dashed] (0,3.5) -- (0.75,1.75) -- (15/8,5/8) -- (5,0) -- (6,0) -- (6,4) -- (0,4);
      \draw[->] ($(M1)!0.5cm!90:(0,3.5)$) -- ($(M1)!0.5cm!270:(0,3.5)$) node[above] {$b_1$};
      \draw[->] ($(M2)!0.5cm!270:(15/8,5/8)$) -- ($(M2)!0.5cm!90:(15/8,5/8)$) node[above] {$b_2$};
      \draw[->] ($(M3)!0.5cm!270:(5,0)$) -- ($(M3)!0.5cm!90:(5,0)$) node[above] {$b_3$};
  \end{tikzpicture}
    \caption{Over-determined case. }
    \label{fig:2b}
  \end{subfigure}
  \caption{Examples of the open convex set $\mathcal C_{\mathbf B}$ in
    two dimensions.}
  \label{fig:2}
\end{figure}
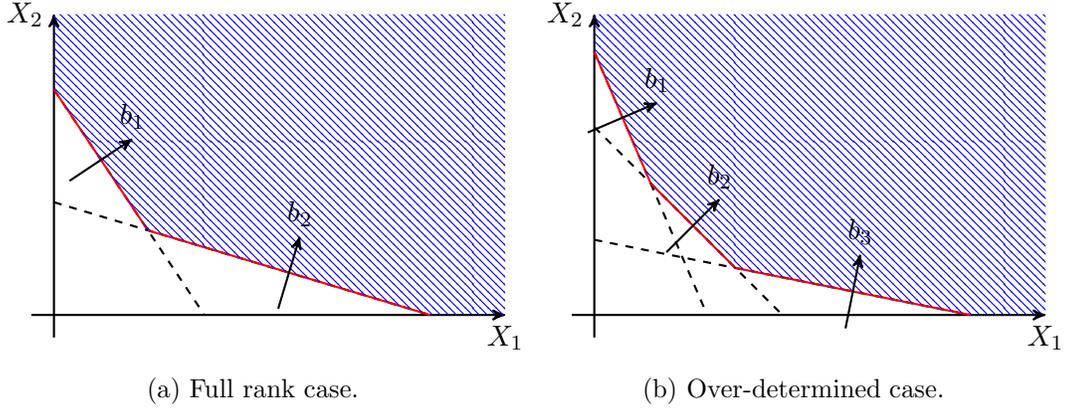 For instance,
Figure~\ref{fig:2} portrays a two-dimensional example ($d=2$) with
the convex set determined by $m=2$ piecewise linear segments (in
Figure~\ref{fig:2a}) or by $m=3$ piecewise linear segments (in Figure~\ref{fig:2b}).

First consider the case where the the matrix $\mathbf B_n$ has full
column rank, that is, where $B_n$ has $d$ columns. Consequently, the
left pseudo-inverse matrix $\mathbf B_{n,+}^{-1} := (\mathbf B_n^T \mathbf
B_n)^{-1} \mathbf B_n^T$ exists, and the linear system $\mathbf
B_n x = \mathbf c_n$ has a unique solution, which can be exploited to
easily establish the rLDP as a consequence of Theorem~\ref{thm:5}.

On the other hand, when $\mathbf B_n$ has less than full rank  the ``projected'' vectors
$\{\mathbf B_n X_1, $ $\ldots, \mathbf B_n X_n\}$ lie in a subspace of at most
$d$ dimensions almost surely, with {\it singular} covariance matrix
$\Sigma_{\mathbf B_n} = \mathbf B_n \Sigma \mathbf B_n^T$. Furthermore, the
rank of the matrix $\mathbf B_n$ is at most $d$, implying that the
subspace has zero Lebesgue measure.
In
Figure~\ref{fig:2b}, for example, the projected random vectors lie in
a two-dimensional subspace.

The boundary $\partial \mathcal C^n_{\mathbf B_n}$ can also be defined through
the intersection of hyperplanes. Given $m$ hyperplanes, defined
through the rows of $\mathbf B_n$, there exist $m-1$
matrices $\mathbf B_n^1, \ldots \mathbf B_n^{m-1}$ that have full column
rank where
\begin{align} \label{eq:intersect-matrix}
\mathbf B_n^i :=
  \begin{bmatrix}
    & -b_{n,i}^T-&\\
    & -b_{n,i+1}^T&
  \end{bmatrix},
\end{align}
and $c_n^i := (c_{n,i}, c_{n,i+1})$ for all $i \in \{1,\ldots,m-1\}$ . Observe that for any 
random vector, 
\begin{align}
  \nonumber
  \left\{ X \in \mathcal C^n_{\mathbf B_n} \right\} &= \left\{ \mathbf B_n^1
  X > c^1_{n}, \mathbf B_n^2 X > c_{n}^2, \cdots, \mathbf B_n^{m-1} X > c_{n}^{m-1}
                                                  \right\}\\
\label{eq:non-full-rank}
&= \left\{X > (\mathbf B_n^1)^{-1} c^1_n \wedge \cdots\wedge(\mathbf B_n^{d-1})^{-1} c_n^{m-1}\right\}.
\end{align}
where $\wedge$ represents the minimizaton operator, and
the inverse matrices exist since $\{\mathbf B_n^i\}$ are full rank. This
fact will be exploited to establish the main theorem of this section.

\begin{proposition}\label{thm:4}
  Suppose $\mathbf B_n$ and $\mathbf c_n$ satisfy
  Assumption~\ref{assume:4b}. 
  \begin{enumerate}
    \item[(i)] If $\mathbf B_n$ and $\mathbf B$ have full column rank, then
      \begin{align} 
        \lim_{n\to\infty} \frac{1}{\left\| \mathbf B_n\right\|^2_2} \log
        \mathbb P  \left( \max_{1\leq i \leq n} X_i \in \mathcal C^n_{\mathbf B_n}\right) = \frac{1}{2} - \frac{1}{2} z^T \Sigma^{-1} z,
    \end{align}
    where $z = \mathbf B_{+}^{-1} c$.

    \item[(ii)] If $\mathbf B_n$ and $\mathbf B$ do not have full column rank, then
      \begin{align}
        \label{eq:16}
               \lim_{n\to\infty} \frac{1}{\left\| \mathbf B_n\right\|^2_2} \log
        \mathbb P  \left( \max_{1\leq i \leq n} X_i \in \mathcal C^n_{\mathbf B_n}\right) = \frac{1}{2} - \frac{1}{2} z^T \Sigma^{-1} z,
      \end{align}
      where $z := (\mathbf B^1)^{-1} c^1 \wedge\cdots\wedge(\mathbf
      B^{d-1})^{-1} c^{m-1}$, and the matrices $\{\mathbf B_1,\cdots,
      \mathbf B_d\}$ and vectors $\{c^1,\ldots,c^{m-1}\}$ are defined analogous to~\eqref{eq:intersect-matrix}.
  \end{enumerate}
\end{proposition}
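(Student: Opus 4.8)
The plan is to reduce both cases to the block-set rate of Proposition~\ref{prop:boxes} by rewriting the polyhedral inclusion event $\{M_n \in \mathcal{C}^n_{\mathbf{B}_n}\}$ as a coordinatewise (orthant) event $\{M_n > z_n\}$ in the \emph{original} coordinates, and then to let $(\mathbf{B}_n,\mathbf{c}_n) \to (\mathbf{B},\mathbf{c})$ through Assumption~\ref{assume:4b}. The decisive device is the representation~\eqref{eq:non-full-rank}: because the paired two-row blocks $\mathbf{B}_n^i$ of~\eqref{eq:intersect-matrix} are full rank and hence invertible, the inclusion event is exactly the box event cut out by an explicit vertex $z_n$, so that Proposition~\ref{prop:boxes} applies directly to $\{M_n > z_n\}$. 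Working in the original coordinates is precisely what lets us bypass the two genuine obstructions of the polyhedral setting: that the componentwise maximum does not commute with the linear map $\mathbf{B}_n$, so $\mathbf{B}_n M_n \neq \max_i \mathbf{B}_n X_i$, and that the projected vectors $\mathbf{B}_n X_i$ may carry a singular covariance $\Sigma_{\mathbf{B}_n} = \mathbf{B}_n \Sigma \mathbf{B}_n^T$.

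For part (i), full column rank of $\mathbf{B}_n$ makes $\mathcal{C}^n_{\mathbf{B}_n}$ a closed convex atypical set with a unique vertex $z_n := \mathbf{B}_{n,+}^{-1}\mathbf{c}_n$ satisfying Assumption~\ref{assume:1}, so I would invoke Theorem~\ref{thm:5} directly and only need to identify its dominating point. I would verify, via the first-order optimality condition recorded after Theorem~\ref{thm:5} (namely $d^T \Sigma^{-1} x^* > 0$ for every feasible direction $d \in \mathcal{F}(x^*)$), that the minimizer of $\min\{\tfrac{1}{2}\langle x, \Sigma^{-1} x\rangle : \mathbf{B}x \geq c\}$ is the vertex itself: a full set of independent constraints is active at $z$, the feasible cone is generated by the columns of $\mathbf{B}_+^{-1}$, and the stationarity inequality pins down $z = \mathbf{B}_+^{-1}c$. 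Continuity of $(\mathbf{B}_n,\mathbf{c}_n) \mapsto \mathbf{B}_{n,+}^{-1}\mathbf{c}_n$ on the full-rank locus then gives $z_n \to \mathbf{B}_+^{-1}c = z$, and continuity of the quadratic form delivers the rate $\tfrac{1}{2} - \tfrac{1}{2} z^T \Sigma^{-1} z$.

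For part (ii) the pseudo-inverse route is closed off, so I would work entirely through~\eqref{eq:non-full-rank}. Each invertible block yields a candidate vertex $(\mathbf{B}_n^i)^{-1}c_n^i$, and their componentwise minimum $z_n := \bigwedge_{i=1}^{m-1}(\mathbf{B}_n^i)^{-1}c_n^i$ is the single binding corner of the upward-closed polyhedron, so that $\{M_n \in \mathcal{C}^n_{\mathbf{B}_n}\} = \{M_n > z_n\}$. Proposition~\ref{prop:boxes} applied to this threshold sequence yields the speed $\|\mathbf{B}_n\|_2^2$ and, upon letting each block converge via Assumption~\ref{assume:4b} so that $z_n \to z = \bigwedge_i (\mathbf{B}^i)^{-1}c^i$, the rate $\tfrac{1}{2} - \tfrac{1}{2} z^T \Sigma^{-1} z$. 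A short geometric lemma is needed here to certify that the componentwise minimum (rather than some other combination of the block vertices) is the correct binding corner; this is where convexity of $\mathcal{C}^n_{\mathbf{B}_n}$ and the sign structure of the normals $b_{n,j}$ enter.

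I expect the main obstacle to be part (ii): justifying the exact orthant identity~\eqref{eq:non-full-rank} and pinning down the binding vertex $z_n$ in the rank-deficient regime, since there one cannot change coordinates and $\Sigma_{\mathbf{B}_n}$ is singular. A secondary technical point, common to both parts, is the interchange of the $n\to\infty$ limit with the $n$-dependent constraint set $\mathcal{C}^n_{\mathbf{B}_n}$: I would establish $z_n \to z$ quantitatively and use continuity of the quadratic rate in the vertex to transport Proposition~\ref{prop:boxes} to the limit, while also checking the analog of Assumption~\ref{assume:3c}, that $z^T \Sigma^{-1} z > 1$, so that the limit is a genuine (strictly negative) decay rate.
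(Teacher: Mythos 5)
Your proposal follows essentially the same route as the paper: both parts are handled by rewriting $\{M_n \in \mathcal C^n_{\mathbf B_n}\}$ as an orthant event $\{M_n > z_n\}$ in the original coordinates (via $\mathbf B_{n,+}^{-1}\mathbf c_n$ in the full-rank case and via the componentwise minimum of the block-inverse vertices from~\eqref{eq:non-full-rank} otherwise), passing to the limit $z_n \to z$ by continuity, and invoking Proposition~\ref{prop:boxes}. The additional verifications you flag --- that the vertex is the true binding corner and that the analogue of Assumption~\ref{assume:3c} holds --- are details the paper's proof leaves implicit, but they do not change the argument's structure.
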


\begin{proof}
  (i) 
  Recall that $M_n = \max_{1\leq i\leq n} X_i$. Since $\mathbf B_n$ is full rank
  \begin{align*}
    \bbP\left( M_n\in \mathcal C^n_{\mathbf B_n}
    \right) &= \mathbb P \left( \mathbf B_n M_n > \mathbf
  \mathbf c_n \right)\\ &=  \mathbb P \left( M_n > \mathbf B_{n,+}^{-1}
                            \mathbf c_n\right).
  \end{align*}
  Now, since
  linear transforms between finite dimensional spaces are continuous, it follows that
  \begin{align}\label{eq:lim-inverse}
    \lim_{n \to \infty} \mathbf B_{n,+}^{-1} \mathbf c_n \to \mathbf
    B^{-1}_+ c = z.
  \end{align}
  Therefore, following Proposition~\ref{prop:boxes} we obtain
  \begin{align}
    \lim_{n\to\infty} \frac{1}{\|\mathbf B_n\|^2} \log \mathbb P
     \left(\max_{1\leq i\leq n} X_i \in \mathcal C^n_{\mathbf B_n} \right)
     =\frac{1}{2} - \frac{1}{2} z^T \Sigma^{-1} z.
  \end{align}

  (ii) The proof of this part of the theorem follows that of part (i)
  and using the definition of $\{X \in \mathcal C^n_{\mathbf B_n}\}$
  in~\eqref{eq:non-full-rank}. We skip the details.
\end{proof}

A straightforward corollary of Proposition~\ref{thm:4} is to assume that
$\mathbf B_n = \mathbf B$ and $\|B\| = 1$. In this case, the sequence of convex
regions $\mathcal C^n_{\mathbf B_n}$ are determined entirely by the
sequence of vectors $\mathbf c_n$, and each $\sC^n_{\mathbf B_n}$ is a
scaled version of the limit region $\sC_{\mathbf B}$. A more
interesting corollary emerges when $\sC_{\mathbf B}$ is an open rhomboidal
region in the positive orthant. Specifically, we are interested
in the $\ell_1$-ball centered at some point $x_0$ in the orthant. Observe that the hyperplanes closest to
the origin defining the rhombus are represented by an orthonormal
matrix $\mathbf B$. We obtain the following corollary to
Theorem~\ref{thm:4}(i).

\begin{corollary}
  Let $\mathbf B$ be an orthonormal matrix. Assume that there exists a
  sequence of matrices $\mathbf B_n$ and vectors $\mathbf c_n$ that satisfy
  Assumption~\ref{assume:4b}, along with $\mathbf B$ and vector
  $\mathbf c$. Then,
  \begin{align*}
    \lim_{n\to\infty} \frac{1}{\|\mathbf B_n\|^2_2} \log \bbP \left(
    \max_{1 \leq i \leq n} X_i \in \sC^n_{\mathbf B_n} \right) = \frac{1}{2} -
    \frac{1}{2}z^T \Sigma^{-1} z
  \end{align*}
  where $z = \mathbf B^{-1} \mathbf c$. 
\end{corollary}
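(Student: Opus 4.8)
The plan is to obtain the corollary as an immediate specialization of Proposition~\ref{thm:4}(i), the only genuine work being to verify that an orthonormal $\mathbf B$ meets the full-column-rank hypothesis of that proposition and to simplify the resulting pseudo-inverse. First I would record that an orthonormal matrix is square, $d \times d$, and that its columns, being mutually orthogonal unit vectors, are linearly independent; hence $\mathbf B$ has full column rank $d$. Together with the standing hypothesis that $\mathbf B_n, \mathbf c_n$ and their limits $\mathbf B, \mathbf c$ satisfy Assumption~\ref{assume:4b}, this places us squarely in the setting of part (i) of Proposition~\ref{thm:4}. One may additionally note that $\mathbf B_n$ can be taken to have full column rank for all large $n$, since full rank is an open condition and $\mathbf B_n/\|\mathbf B_n\| \to \mathbf B$.

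Next I would simplify the dominating point. Since $\mathbf B^T \mathbf B = \mathbf I$ for an orthonormal matrix, the left pseudo-inverse collapses to
\begin{equation*}
\mathbf B_+^{-1} = (\mathbf B^T \mathbf B)^{-1}\mathbf B^T = \mathbf B^T = \mathbf B^{-1}.
\end{equation*}
Consequently the dominating point produced by Proposition~\ref{thm:4}(i), namely $z = \mathbf B_+^{-1}\mathbf c$, is exactly $z = \mathbf B^{-1}\mathbf c$, and the convergence in~\eqref{eq:lim-inverse}, $\mathbf B_{n,+}^{-1}\mathbf c_n \to \mathbf B^{-1}\mathbf c$, follows from continuity of the (pseudo-)inverse on full-rank matrices together with the homogeneity normalization in Assumption~\ref{assume:4b}. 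Substituting this $z$ into the conclusion of Proposition~\ref{thm:4}(i) yields
\begin{equation*}
\lim_{n\to\infty} \frac{1}{\|\mathbf B_n\|_2^2}\log \bbP\!\left(\max_{1\le i\le n} X_i \in \sC^n_{\mathbf B_n}\right) = \frac12 - \frac12\, z^T \Sigma^{-1} z, \qquad z = \mathbf B^{-1}\mathbf c,
\end{equation*}
which is precisely the claimed identity.

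I do not expect a real obstacle, since the statement is a corollary; the one point that deserves care is the \emph{geometric} reduction implicit in the set-up. The orthonormal matrix $\mathbf B$ is meant to encode only the $d$ faces of the rhombus nearest the origin, so that under the isometry $x \mapsto \mathbf B x$ the region $\sC_{\mathbf B} = \{x : \mathbf B x > \mathbf c\}$ becomes a shifted orthant (block) with corner at $\mathbf B^{-1}\mathbf c$. One should therefore confirm that the dominating point indeed lies at this corner --- equivalently, that the remaining, farther faces of the $\ell_1$-ball do not participate in the minimization of $z^T \Sigma^{-1} z$ over the region --- so that the full-rank branch (i), rather than branch (ii), is the correct one to invoke. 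This is exactly the block-set picture of Proposition~\ref{prop:boxes} read in rotated coordinates, and once it is granted the result is immediate.
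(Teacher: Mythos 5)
Your proposal is correct and matches the paper's (implicit) derivation: the corollary is stated as a direct consequence of Proposition~\ref{thm:4}(i), with the only simplification being that orthonormality gives $\mathbf B_{+}^{-1} = (\mathbf B^T\mathbf B)^{-1}\mathbf B^T = \mathbf B^T = \mathbf B^{-1}$, hence $z = \mathbf B^{-1}\mathbf c$. Your added remarks on full column rank and on the geometric reduction to a shifted block are sensible sanity checks but do not change the argument.
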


\section{An rLDP for Gaussian-Mixture Extrema on Convex Sets}

Recall that a Gaussian-mixture model $\tilde{X}$ is defined as a multivariate
random vector whose density function satisfies
\begin{align}
  \label{eq:gmm}
  \bbP(\tilde{X} \in dx) = \sum_{i=1}^K \pi_j \phi_j(dx),
\end{align}
where $K$ is the number of mixture components, $\{\pi_1,\ldots,\pi_K\}$ is the set of mixture probabilities
such that $\sum_{j=1}^K \pi_j = 1, \pi_j \geq 0$ and
$\{\phi_1, \ldots,\phi_K\}$ is a set of multivariate Gaussian density
functions. The random vaector $\tilde{X}$ is not necessarily Gaussian; nonetheless, as will be demonstrated, the previous rLDP for
Gaussian extrema can be used to derive a corresponding result for Gaussian-mixture extrema. We focus on the general convex case, and leave
specific examples to the reader.

The parameter set of the Gaussian mixture model consists of the
tuples, $\{(\pi_j, \m_j, \Sigma_j),~j=1,\ldots,K\}$, where $\mu_j$ and
$\Sigma_j$ are the mean vector and covariance matrix of the $j$th
component Gaussian. Without loss of generality we assume that $\mu_j >
0$, and let $\sC$ be an atypical convex set in the positive
orthant. We will make the following further
\begin{assumption}
  \label{assume:4}
  The component mean vectors $\{\m_1,\ldots,\m_K\}$ do not belong to
  the closed convex set $\sC$, that is, $\mu_j \notin \sC$ for $j=1,2,\ldots,K$.
\end{assumption}
\begin{assumption} \label{assume:3d}
  The component mean vectors $\{\m_1,\ldots,\m_K\}$ and covariance
  matrices $\{\Sigma_1,\ldots,\Sigma_K\}$ satisfy, for any $\e \in \sC$,
  \begin{align}
    \min_{1\leq j\leq K} \frac{1}{2}\left\langle (\e - \m_j), \mathbf A^{-1} \Sigma_j^{-1}
    \mathbf A (\e - \m_j) \right\rangle > 1.
  \end{align}
\end{assumption}
We are now ready to establish the rLDP for the Gaussian-mixture extrema on closed convex sets.

\begin{proposition}\label{prop:gmm}
    
  Let $\tilde{X}_i \in \bbR^d, i=1,2,\ldots$ be iid Gaussian-mixture random vectors each having mixture probabilties $\{\pi_1,\ldots,\pi_K\}$ and mixture component densitiies 
$\{\phi_1, \ldots,\phi_K\}$. Suppose Assumption~\ref{assume:3}, Assumption~\ref{assume:4} and Assumption~\ref{assume:3d} are
  satisfied. Then the componentwise maximum $$\tilde{M}_n := (\max_{1\leq i\leq n} \tilde{X}_i^1, \ldots, \max_{1\leq i\leq n} \tilde{X}_i^d)$$ satisfies an rLDP with rate $J(\sC) : = \frac{1}{2} - \frac{1}{2} \bigwedge_{j=1}^K \inf_{x \in \sC}\langle (x-\m_j), \mathbf
    A^{-1} \Sigma_j^{-1} \mathbf A (x-\m_j)\rangle$ and speed $v(a) = a^2$, that is,
  \begin{align}
    \lim_{n \to \infty} \frac{1}{\|\mathbf A_n\|_2^2} \log \bbP
    \left(\d_\sC\left(\mathbf A_n^{-1} \tilde{M}_n\right) \geq 0\right) =
    \frac{1}{2} - \frac{1}{2} \bigwedge_{j=1}^K \inf_{x \in \sC}\langle (x-\m_j), \mathbf
    A^{-1} \Sigma_j^{-1} \mathbf A (x-\m_j)\rangle.
  \end{align}
\end{proposition}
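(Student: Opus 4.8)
The plan is to condition on the latent mixture-component labels, thereby reducing the problem to $K$ copies of the mean-zero Gaussian analysis already established, and then to combine the component contributions through a finite Laplace (``largest-exponent-wins'') principle that produces the minimum $\bigwedge_{j}$ in the rate. Write each draw as $\tilde X_i \sim N(\mu_{J_i},\Sigma_{J_i})$ with $\bbP(J_i=j)=\pi_j$. Conditioning on $\{J_i=j\}$, the $j$th component contributes a Gaussian whose $\mathbf A_n^{-1}$-scaled center sits at $\mu_j$, so centering turns the inclusion event $\{\mathbf A_n^{-1}\tilde X_i\in\sC\}$ into $\{\mathbf A_n^{-1}Y\in\sC-\mu_j\}$ for a mean-zero $Y\sim N(0,\Sigma_j)$. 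The translated set $\sC-\mu_j$ is closed, convex, has an interior and an extreme point, and fails to contain the origin precisely because $\mu_j\notin\sC$ (Assumption~\ref{assume:4}); hence $\sC-\mu_j$ satisfies Assumption~\ref{assume:1}, and Assumption~\ref{assume:3d} is exactly the assertion that the far-enough condition of Assumption~\ref{assume:3c} holds for each of the $K$ translated sets. Writing $\rho_j:=\inf_{x\in\sC}\langle(x-\mu_j),\mathbf A^{-1}\Sigma_j^{-1}\mathbf A(x-\mu_j)\rangle$ with minimizer $x_j^*$, this is what makes the single-Gaussian machinery of Lemma~\ref{prop:convex}, Lemma~\ref{lem:convex}, and Lemma~\ref{lem:exp-eq} applicable component by component.

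First I would establish a mixture version of the exponential-equivalence Lemma~\ref{lem:exp-eq}, namely that $\d_\sC(\mathbf A_n^{-1}\tilde M_n)$ and $\max_{1\le i\le n}\d_\sC(\mathbf A_n^{-1}\tilde X_i)$ are exponentially equivalent under Assumption~\ref{assume:3}. The conspiracy argument of Lemma~\ref{lem:exp-eq} transfers, but the anchoring box $B$, pinned in Lemma~\ref{lem:exp-eq} at the origin and the single dominating point $x^*$, must now be adapted to accommodate the fact that the samples whose coordinates conspire to push $\tilde M_n$ into $\sC$ may originate from distinct components centered at the different $\mu_j$. The uniform separation furnished by Assumption~\ref{assume:3d} guarantees that the exponent $\alpha_j:=\tfrac12\langle(x_j^*-\mu_j),\mathbf A^{-1}\Sigma_j^{-1}\mathbf A(x_j^*-\mu_j)\rangle$ exceeds $1$ for every $j$, which is exactly the input the proof of Lemma~\ref{lem:exp-eq} needs to kill the conspiracy at the logarithmic scale. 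With the equivalence in hand, it suffices to analyze the ``at least one in the set'' event for the mixture.

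Next I would compute the single-draw inclusion probability. By the decomposition above together with Lemma~\ref{prop:convex} (extended to the $\mathbf A_n$-scaling as in the proof of Lemma~\ref{lem:convex}), each component obeys $\frac{1}{\|\mathbf A_n\|_2^2}\log\bbP(\mathbf A_n^{-1}\tilde X_1\in\sC\mid J_1=j)\to -\tfrac12\rho_j$. Since $p_n:=\bbP(\mathbf A_n^{-1}\tilde X_1\in\sC)=\sum_{j=1}^K\pi_j\,\bbP(\mathbf A_n^{-1}\tilde X_1\in\sC\mid J_1=j)$ is a finite convex combination, the logarithm of the sum is dominated by its slowest-decaying term, giving $\frac{1}{\|\mathbf A_n\|_2^2}\log p_n\to-\tfrac12\bigwedge_{j=1}^K\rho_j$; the finitely many weights $\pi_j$ and subdominant terms contribute only $o(\|\mathbf A_n\|_2^2)$. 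Finally, exactly as in Lemma~\ref{lem:convex}, the iid structure yields $\bbP(\max_i\d_\sC(\mathbf A_n^{-1}\tilde X_i)\ge0)=1-(1-p_n)^n$, and since $\alpha_j>1$ for every $j$ (Assumption~\ref{assume:3d}) forces $n p_n\to0$, a Taylor expansion of $(n-1)\log(1-p_n)$ shows the event probability is of order $n\,p_n$ on the exponential scale. The $n$-factor contributes $\frac{\log n}{\|\mathbf A_n\|_2^2}\to\tfrac12$ by Assumption~\ref{assume:3}, so combining with the exponential equivalence of the previous step gives $\frac{1}{\|\mathbf A_n\|_2^2}\log\bbP(\d_\sC(\mathbf A_n^{-1}\tilde M_n)\ge0)\to\tfrac12-\tfrac12\bigwedge_{j=1}^K\rho_j$, as claimed.

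I expect the main obstacle to be the exponential-equivalence step, because the conspiracy event can now be assembled from samples of several distinct components and must be controlled uniformly over the $K$ mixing weights, centers, and covariances; this is where Assumption~\ref{assume:3d}, rather than a single instance of Assumption~\ref{assume:3c}, is indispensable. By contrast, the Laplace-principle combination is routine precisely because $K$ is finite, so no uniformity over an unbounded index is required and the minimum $\bigwedge_j\rho_j$ emerges directly from the logarithm of a $K$-term sum.
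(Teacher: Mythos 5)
Your proposal is correct and follows essentially the same route as the paper: decompose the single-draw inclusion probability over the mixture components, apply Lemma~\ref{prop:convex} to each (recentered) component, combine via the principle of the largest term to obtain $\bigwedge_j \rho_j$, and then pass to the componentwise maximum via the arguments of Lemma~\ref{lem:convex} and Lemma~\ref{lem:exp-eq}. If anything, you are more careful than the paper, which simply cites Lemma~\ref{lem:exp-eq} without addressing the point you correctly flag --- that the conspiracy/anchoring-box argument must be checked to survive samples drawn from distinct components, with Assumption~\ref{assume:3d} supplying the uniform $\alpha_j>1$ needed there.
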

\begin{proof}
  First consider a single Gaussian mixture random vector, $X$, and observe that
  \begin{align*}
    \bbP\left( \d_\sC\left(\frac{X}{a_n}\right) \geq 0\right) =
    \sum_{j=1}^K \pi_j \int_{a_n \sC} \phi_j(dx),
  \end{align*}
  using~\eqref{eq:gmm}. By the ``principle of the largest term"
  \cite[Lemma 1.2.15]{DeZe2010} it
  follows that
  \begin{align*}
    \limsup_{n\to\infty} \frac{1}{a_n^2} \log \bbP\left(
    \d_\sC\left(\frac{X}{a_n}\right) \geq 0\right) = \bigvee_{j=1}^K
    \limsup_{n\to\infty} \frac{1}{a_n^2} \log \left( \pi_j \int_{a_n
    \sC} \phi_j(dx)\right). 
  \end{align*}
  Following Lemma~\ref{prop:convex}, $\limsup_{n\to\infty} \frac{1}{a_n^2} \log \left( \pi_j \int_{a_n\sC} \phi_j(dx)\right) = -\frac{1}{2} \inf_{x
  \in\sC}\langle(x-\m_j), \Sigma_j^{-1} (x-\m_j) \rangle$. (Note that
Lemma~\ref{prop:convex} is stated for a centered Gaussian
multivariate, but the proof does not change with a non-centered
distribution.) 

It follows that
\begin{align}
  \label{eq:gmm-ub-single}
   \limsup_{n\to\infty} \frac{1}{a_n^2} \log \bbP\left(
    \d_\sC\left(\frac{X}{a_n}\right) \geq 0\right) = -\frac{1}{2}
  \bigwedge_{j=1}^K \inf_{x\in\sC}\left\langle(x-\m_j), \Sigma_j^{-1} (x-\m_j) \right\rangle.
\end{align} Next, we use the obvious lower bound and obtain,
\begin{align*}
 \log \bbP\left( \d_\sC\left(\frac{X}{a_n}\right) \geq 0\right) & \geq
                                                                  \log
                                                                  \bigvee_{j=1}^K \left( \pi_j \int_{a_n
                                                                  \sC} \phi_j(dx)\right)\\
  &=\bigvee_{j=1}^K \log \left( \pi_j \int_{a_n
                                                                  \sC} \phi_j(dx)\right),
\end{align*}
where the equality follows from the monotonicity of the logarithm
function. Again, applying Lemma~\ref{prop:convex} to the lower bound
we obtain
\begin{align}
  \label{eq:gmm-lb-single}
   \liminf_{n\to\infty} \frac{1}{a_n^2} \log \bbP\left(
    \d_\sC\left(\frac{X}{a_n}\right) \geq 0\right) \geq -\frac{1}{2}
  \bigwedge_{j=1}^K \inf_{x
  \in\sC}\left\langle(x-\m_j), \Sigma_j^{-1} (x-\m_j) \right\rangle.
\end{align}

For the sample maximum of the Gaussian-mixture observe that
Lemma~\ref{lem:convex} implies that
\begin{align*}
  \lim_{n\to\infty} \frac{1}{a_n^2} \log \bbP\left(
  \max_{1 \leq i \leq n}\d_\sC\left(\frac{X_i}{a_n}\right) \geq 0
  \right) = \frac{1}{2} - \frac{1}{2} \bigwedge_{j=1}^K \inf_{x
  \in\sC}\left\langle(x-\m_j), \Sigma_j^{-1} (x-\m_j) \right\rangle.
\end{align*}
An application of Lemma~\ref{lem:exp-eq} yields the final result.
\end{proof}



\bibliographystyle{apt} 
\bibliography{refs}   
\end{document}